\title{Reconstruction of Space-Dependent Thermal Conductivity from Sparse Temperature Measurements}
\author[1]{Guangting Yu}
\author[1]{Shiwei Lan}
\author[2]{Kookjin Lee}
\author[1]{Alex Mahalov}
\newcommand{\email}[1]{\href{mailto:#1}{#1}}
\affil[1]{School of Mathematical and Statistical Sciences, Arizona State University, Tempe, AZ 85281 USA
\email{guangtin@asu.edu}, \email{slan@asu.edu}, \email{mahalov@asu.edu}}
\affil[2]{School of Computing and Augmented Intelligence, Arizona State University, Tempe, AZ 85281 USA \email{kookjin.lee@asu.edu}}
\newenvironment{keywords}{
\noindent\textbf{Keywords:}
}{\vspace{1ex}}
\newenvironment{MSCcodes}{
\noindent\textbf{Mathematics Subject Classification (2020):}
}{\vspace{1ex}}
\newtheorem{theorem}{Theorem}[section]
\newtheorem{proposition}[theorem]{Proposition}
\newtheorem{lemma}[theorem]{Lemma}
\theoremstyle{remark}
\newtheorem*{remark}{Remark}
\begin{document}

\maketitle

\begin{abstract}
We present a novel method for reconstructing thermal conductivity coefficients in 1D and 2D heat equations using moving sensors that dynamically traverse a domain to record sparse and noisy temperature measurements.
We significantly reduce the computational cost associated with forward  evaluations of partial differential equations by employing automatic differentiation, enabling a more efficient and scalable reconstruction process.
This allows inverse problems to be solved with fewer sensors and observations.
Specifically, we demonstrate successful reconstruction of thermal conductivity on the 1D circle and 2D torus, using one and four moving sensors, respectively, with their positions recorded over time.
Our method incorporates sampling algorithms to compute confidence intervals for the reconstructed conductivity, improving robustness against measurement noise.
Extensive numerical simulations of heat dynamics validate the efficacy of our approach, confirming both the accuracy and stability of the reconstructed thermal conductivity.
Additionally, the method is thoroughly tested using large datasets from machine learning, allowing us to evaluate its performance across various scenarios and ensure its reliability.
This approach provides a cost-effective and flexible solution for conductivity reconstruction from sparse measurements, making it a robust tool for solving inverse problems in complex domains.
\end{abstract}

\begin{keywords}
Inverse problem, Moving sensor, Parabolic equation, Thermal conductivity
\end{keywords}

\begin{MSCcodes}
35K05, 35Q79, 35R30, 80A23, 80M20, 93C20, 93C41
\end{MSCcodes}
\section{Introduction}\label{sec1}

This paper addresses the numerical solution of the inverse heat equation, specifically focusing on reconstructing thermal conductivity from temperature sensor measurements.
Thermal conductivity, a critical property governing heat diffusion, is essential in various industrial applications, including cooling of microchips \citep{Li2022}, temperature regulation in chemical reactors, and the optimization of heat-dissipating devices \citep{CAO2018150}.
These applications require precise temperature control to ensure the efficiency, safety, and longevity of the components involved.

While thermal conductivity values for homogeneous materials like metals or liquids are well-documented \citep{YNagasaka_1981}, measuring conductivity in inhomogeneous materials with spatially varying properties poses a significant challenge.
Traditional methods often require physically breaking down the material into smaller, homogeneous pieces, which can compromise the material's integrity and yield inaccurate measurements.

Recent advancements in sensor technology have enabled the development of non-invasive methods to overcome these limitations.
\citet{Stuart_2010} provides a method within a Bayesian framework and applies it to reconstruct the thermal conductivity of a one-dimensional rod.
However, this approach requires many sensors on the rod to achieve accurate reconstruction.
\citet{BESKOS2017327} extends the Bayesian framework to two-dimensional non-convex domains, but the requirement for numerous sensors remains.
\citet{Zhang2013fractional} proposed the homotopy regularization algorithm with sigmoid-type homotopy parameter to simultaneously invert the space-dependent diffusion coefficient and source magnitude, but it works only for one-dimensional spatial domain.
More recently, \citet{Boumenir2023} proposed a promising alternative method using multiple temperature measurements with only a single sensor to accurately reconstruct the thermal conductivity of a one-dimensional rod.
However, it relies on multiple physical experiments with different initial conditions, which can be costly and time-consuming.

To address these challenges, we propose a novel method utilizing moving sensors that dynamically traverse the domain while recording both temperature and position data.
In one-dimension domains, this simply means the sensor is sliding on the rod.
This approach fully utilizes the sensor's ability to record temperature over time, forming a time series of temperature and position measurements.
This enables the effective reconstruction of the thermal conductivity from a single physical experiment, completely eliminating the need for multiple experiments and significantly reduces the number of sensors.
By leveraging machine learning techniques and automatic differentiation, we optimize the reconstruction process and minimize the computational costs associated with forward evaluations of partial differential equations (PDEs).
These advancements make our method efficient, scalable, and practical for real-world applications.
Furthermore, we extend this method to two-dimensional domains, such as the two-dimensional torus, where fewer moving sensors are required to achieve accurate reconstructions, thus lowering overall costs.
A practical application could involve reconstructing the thermal conductivity of a city using temperature sensors mounted on vehicles, with GPS tracking providing accurate positional data.

We rigorously validate our method through numerical simulations and large-scale testing on machine learning datasets, ensuring its robustness across various scenarios.
Our theoretical contributions include a thorough analysis of numerical stability, leveraging the spectral properties of the underlying problem to ensure robustness against errors in the conductivity reconstruction.
Additionally, this work builds on and extends the research previously published by \citet{Boumenir2023} and \citet{https://doi.org/10.1002/mma.7601} further advancing techniques in thermal conductivity reconstruction for inhomogeneous materials.
Given the significance of this method, we believe it offers a cost-effective and practical solution for a wide range of applications, demonstrating its potential for broader use in real-world scenarios.


\section{Problem background}\label{sec2}

We consider thermal conductivity recovery from sparse temperature measurements.
For conciseness, all mentions of conductivity in the following context refer to thermal conductivity.
The underlying dynamics of temperature follows
\begin{equation}\label{eqn:heat-dynamics}
    u_t - \nabla\cdot[a(\mathbf{x}) \nabla u ] = f(t,\mathbf{x}),
\end{equation}
subject to periodic boundary conditions.
Here, \(a:\mathcal{D}\to\mathbb{R}^+\) denotes the conductivity, which is constant in both time and temperature \citep{Zheng2011}, and \(u:\mathbb{R}^+\times\mathcal{D}\ni(t,\mathbf{x})\to\mathbb{R}\) denotes the temperature.
The initial condition \(u(t=0,\mathbf{x})\) on the domain is given.
For simplicity, throughout this work we consider the spatial domain \(\mathcal{D}=\mathbb{T}=\mathbb{R}/\mathbb{Z}\) in the 1D case or \(\mathcal{D}=\mathbb{T}^2=\mathbb{R}^2/\mathbb{Z}^2\) in the 2D case, with the domain represented as \([0,1)\) or \([0,1)^2\), respectively.
We restrict our focus to \emph{periodic boundary conditions}, leaving other boundary conditions such as Dirichlet or Neumann for future consideration.
In the 1D case, the gradient operator is \(\nabla=\partial_x\), while in the 2D case, \(\nabla=\begin{bmatrix}\partial_x & \partial_y\end{bmatrix}^\top\), denoting the spatial derivatives.

\subsection{Recoverability}
We first rule out the scenarios where it is impossible to recover the conductivity from temperature measurements due to the lack of uniqueness.
We propose the following criteria where even full observation of temperature fails to recover any information about \(a(\mathbf{x})\).
\begin{proposition}[Non-recoverability of conductivity]\label{prop:non-recoverability}
Consider the temperature dynamics governed by \eqref{eqn:heat-dynamics} with the initial condition \(u_0(\mathbf{x}):=u(t=0, \mathbf{x})\) provided.
If the following conditions hold:
\begin{itemize}
    \item \(\nabla u(t=0, \mathbf{x}) = 0 \quad \forall \mathbf{x} \in \mathcal{D}\),
    \item \(\nabla f(t, \mathbf{x}) = 0 \quad \forall (t, \mathbf{x}) \in \mathbb{R}^+ \times \mathcal{D}\),
\end{itemize}
then the conductivity \(a(\mathbf{x})\) cannot be reconstructed from \(\{u(t,\mathbf{x})\}_{\mathbb{R^+}\times\mathcal{D}}\), which is the full temperature observation.
\end{proposition}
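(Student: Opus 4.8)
The plan is to show that under the two stated conditions, the temperature field $u(t,\mathbf{x})$ is forced to be spatially constant for all time, so it carries no information whatsoever about $a(\mathbf{x})$. Concretely, I would produce an explicit candidate solution: let $v(t,\mathbf{x}) := u_0 + \int_0^t f(s,\mathbf{x})\,ds$, where $u_0$ is the (constant) initial value and, by the second hypothesis, $f(s,\cdot)$ is spatially constant for each $s$, so $v(t,\cdot)$ is spatially constant for each $t$. Then $\nabla v(t,\mathbf{x}) = 0$ identically, hence $\nabla\cdot[a(\mathbf{x})\nabla v] = 0$, and $v_t = f(t,\mathbf{x})$, so $v$ satisfies \eqref{eqn:heat-dynamics} with the prescribed initial condition. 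By uniqueness of solutions to the (linear, uniformly parabolic) heat equation on the torus, $u \equiv v$.

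The key steps, in order: first, record the regularity/ellipticity assumptions implicitly in force (e.g. $a$ bounded and bounded away from zero, $f$ integrable in $t$) so that the forward problem \eqref{eqn:heat-dynamics} with periodic boundary conditions is well-posed and has a unique solution — this is what makes the phrase "cannot be reconstructed" meaningful, since it presumes a well-defined forward map. Second, verify that $v$ as defined above is genuinely a solution: check the initial condition $v(0,\cdot) = u_0$, check that the divergence term vanishes because $\nabla v \equiv 0$, and check $v_t = f$. Third, invoke uniqueness to conclude $u = v$ for every choice of conductivity $a$. Fourth, observe that $v$ does not depend on $a$ at all; therefore two distinct conductivities $a_1 \neq a_2$ (both admissible) produce exactly the same full temperature observation, which is precisely the failure of identifiability that defines non-recoverability.

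The main obstacle is not any single computation — each step is short — but rather pinning down the right notion of "solution" and "reconstruction" so the argument is airtight. In particular I must be careful that the class of admissible conductivities is nonempty and contains at least two distinct elements (otherwise the statement is vacuous), and that the well-posedness/uniqueness theorem I cite applies to the regularity class of $a$ under consideration; if $a$ is only assumed measurable and elliptic, I would appeal to the standard theory of weak solutions in $L^2(0,T;H^1(\mathbb{T}^d))\cap C([0,T];L^2(\mathbb{T}^d))$, where uniqueness follows from an energy estimate (test the difference of two solutions against itself and use Grönwall). A secondary subtlety: the hypotheses are stated pointwise ($\nabla u_0 = 0$, $\nabla f = 0$ everywhere), so I should note these are equivalent to $u_0$ being a constant and $f(t,\cdot)$ being constant for a.e. $t$, which is what the explicit solution construction actually uses. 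Once these foundational points are settled, the conclusion is immediate.
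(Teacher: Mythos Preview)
Your proposal is correct and follows essentially the same route as the paper: both construct the explicit spatially-constant solution \(u(t)=u_0+\int_0^t f(s)\,\mathrm{d}s\) and observe that it is independent of \(a\). You are more careful than the paper in explicitly invoking uniqueness of the forward problem and in flagging the regularity and admissibility assumptions needed for that step; the paper simply asserts that the PDE reduces to a scalar ODE and writes down its solution.
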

\begin{proof}
Given \(\nabla u(t=0, \mathbf{x}) = 0\) and \(\nabla f(t, \mathbf{x}) = 0\), \eqref{eqn:heat-dynamics} simplifies to:
\[ u_t = f(t), \qquad \text{where } u(t,\mathbf{x}) = u(t) \text{ and } f(t, \mathbf{x})=f(t), \quad \forall (t,\mathbf{x})\in\mathbb{R}^+\times\mathcal{D}, \]
which is a scalar ordinary differential equation (ODE) in time, independent of \(\mathbf{x}\).
The solution to this ODE is:
\[ u(t) = u(0) + \int_0^t f(s) \, \mathrm{d}s. \]
Since \( a(\mathbf{x}) \) does not appear in this solution, it cannot be reconstructed from the temperature observations.
\end{proof}
\begin{remark}
The condition \(\nabla u(t=0, \mathbf{x}) = 0\) indicates that the initial temperature distribution is spatially constant, preventing any heat flow between locations. 
Additionally, \(\nabla f(t, \mathbf{x}) = 0\) ensures that the external heat source is uniform across the domain, so the temperature distribution remains spatially constant throughout the time.
These conditions lead to a degenerate scenario where \eqref{eqn:heat-dynamics} reduces to an ODE, and the spatial variation of conductivity \(a(\mathbf{x})\) becomes irrelevant, resulting in the loss of uniqueness in the inverse problem.
\end{remark}

\begin{lemma}[Lipschitz dependence of solution on conductivity]
\label{lemma:lipschitz-dep}
Let domain be \(\mathcal{D}=\mathbb{T}\) or \(\mathcal{D}=\mathbb{T}^2\).
Let \(a,b\in{C}^\infty(\mathcal{D};\mathbb{R}^+)\) be two bounded non-degenerate conductivities:
\[ m:=\min\left\{\inf_\mathcal{D} a, \inf_\mathcal{D} b \right\}>0, \qquad M:=\sup_\mathcal{D} \{a+b\} <\infty. \]
Let \(u(t,\mathbf{x};a), u(t,\mathbf{x};b)\) denote the solution to \eqref{eqn:heat-dynamics} with the same inhomogeneous term \(f\) and the same initial condition 
\[ u_0(\cdot):=u(t=0, \cdot; a)=u(t=0, \cdot; b)\in C^{\infty}(\mathcal{D};\mathbb{R}). \]
Then with a given final time \(T>0\),
\[ \|u(t,\mathbf{x};a) - u(t,\mathbf{x};b)\|_{L_t^2 L_\mathbf{x}^2([0,T]\times\mathcal{D})} \lesssim \|a-b\|_{L^{\infty}(\mathcal{D})}. \]
\end{lemma}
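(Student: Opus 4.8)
The plan is to derive an energy estimate for the difference $w := u(\cdot,\cdot;a) - u(\cdot,\cdot;b)$. Writing the two equations $\partial_t u(a) - \nabla\cdot[a\nabla u(a)] = f$ and $\partial_t u(b) - \nabla\cdot[b\nabla u(b)] = f$ and subtracting, I get
\[
\partial_t w - \nabla\cdot[a\nabla w] = \nabla\cdot[(a-b)\nabla u(b)],
\]
with $w(0,\cdot) = 0$ since the initial data agree. This is a linear parabolic equation for $w$ with a source term that is small in the sense controlled by $\|a-b\|_{L^\infty}$. The main work is to convert this into the claimed $L^2_t L^2_\mathbf{x}$ bound.

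First I would test the equation against $w$ itself and integrate over $\mathcal{D}$ (periodic boundary conditions kill all boundary terms in the integration by parts). This yields
\[
\frac{1}{2}\frac{d}{dt}\|w(t)\|_{L^2_\mathbf{x}}^2 + \int_\mathcal{D} a|\nabla w|^2\,d\mathbf{x} = -\int_\mathcal{D} (a-b)\nabla u(b)\cdot\nabla w\,d\mathbf{x}.
\]
Using the lower bound $a \ge m > 0$ on the left and Cauchy--Schwarz followed by Young's inequality on the right, $\big|\int (a-b)\nabla u(b)\cdot\nabla w\big| \le \|a-b\|_{L^\infty}\|\nabla u(b)\|_{L^2_\mathbf{x}}\|\nabla w\|_{L^2_\mathbf{x}} \le \tfrac{m}{2}\|\nabla w\|_{L^2_\mathbf{x}}^2 + \tfrac{1}{2m}\|a-b\|_{L^\infty}^2\|\nabla u(b)\|_{L^2_\mathbf{x}}^2$, I absorb the gradient term and obtain
\[
\frac{d}{dt}\|w(t)\|_{L^2_\mathbf{x}}^2 + m\|\nabla w(t)\|_{L^2_\mathbf{x}}^2 \le \frac{1}{m}\|a-b\|_{L^\infty}^2\,\|\nabla u(t,\cdot;b)\|_{L^2_\mathbf{x}}^2.
\]
Integrating in time from $0$ to $t \le T$ and using $w(0)=0$ gives $\|w(t)\|_{L^2_\mathbf{x}}^2 \le \tfrac{1}{m}\|a-b\|_{L^\infty}^2 \int_0^T \|\nabla u(s,\cdot;b)\|_{L^2_\mathbf{x}}^2\,ds$; integrating once more over $[0,T]$ and taking square roots produces the stated estimate, with the implied constant depending on $T$, $m$, and $\|\nabla u(\cdot;b)\|_{L^2_t L^2_\mathbf{x}}$.

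The remaining ingredient — and the step I expect to be the real obstacle — is establishing that $\|\nabla u(\cdot,\cdot;b)\|_{L^2_t L^2_\mathbf{x}([0,T]\times\mathcal{D})}$ is finite and bounded in terms of the data $(u_0, f, m, M)$ only, uniformly over admissible $b$. This is a standard parabolic energy estimate for $u(b)$ itself: test $\partial_t u(b) - \nabla\cdot[b\nabla u(b)] = f$ against $u(b)$, use $b \ge m$ and $b \le M$, and apply Grönwall to control $\sup_{[0,T]}\|u(b)\|_{L^2_\mathbf{x}}^2 + \int_0^T\|\nabla u(b)\|_{L^2_\mathbf{x}}^2$ by $\|u_0\|_{L^2_\mathbf{x}}^2$ and $\|f\|_{L^2_t L^2_\mathbf{x}}^2$ (or $\|f\|_{L^2_t H^{-1}_\mathbf{x}}^2$). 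The smoothness hypotheses $a,b\in C^\infty$ and $u_0\in C^\infty$ guarantee the solutions are classical, so all the integrations by parts and the manipulation $\partial_t\|w\|^2 = 2\int w\,\partial_t w$ are rigorously justified; one could alternatively invoke standard well-posedness theory for uniformly parabolic equations to get the requisite regularity. With this uniform bound in hand, the constant in the lemma is fully determined by $T$, $m$, $M$, and the data, completing the argument.
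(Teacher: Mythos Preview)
Your proof is correct and takes a genuinely different route from the paper's. You argue directly via an energy estimate: subtract the two equations, test the difference $w$ against itself, absorb the gradient term using the uniform ellipticity $a\ge m$, and close the estimate by invoking the standard a~priori bound $\|\nabla u(\cdot;b)\|_{L^2_tL^2_\mathbf{x}}\lesssim \|u_0\|_{L^2}+\|f\|_{L^2_tL^2_\mathbf{x}}$, which depends only on $m,M,T$ and the data. This is the textbook parabolic stability argument and yields an explicit constant. The paper instead argues by contradiction and compactness: assuming the Lipschitz bound fails, it produces sequences $a_k,b_k$ with $\|a_k-b_k\|_{L^\infty}\to0$ but $\|u(a_k)-u(b_k)\|\ge1$, invokes Aubin--Lions to pass to limits, and reaches a contradiction with uniqueness; a final telescoping step bridges arbitrary $a,b$ via intermediate convex combinations. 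Your approach is more elementary, fully constructive, and self-contained; the paper's buys brevity at the cost of relying on a black-box compactness lemma and giving no information about the implied constant.
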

\begin{proof}
If not, we can find a sequence by taking \(\epsilon=\frac{1}{k}\) for \(k\) starting at sufficiently large (\(k\geq K\)),
\begin{multline*}
    \exists a_{k},b_{k}\in C^\infty(\mathcal{D};\mathbb{R}^+) \quad\text{with}\quad \|a_k - b_k \|_{L^\infty} \leq \frac{1}{k} \\
    \text{s.t.}\quad \|u(t,\mathbf{x};a_k) - u(t,\mathbf{x};b_k)\|_{L_t^2 L_\mathbf{x}^2([0,T]\times\mathcal{D})}\geq 1
\end{multline*}
with the following convergence implied by the Aubin--Lions compactness lemma (Theorem II.5.16 of \citet{boyer-fabrie2013}):
\begin{align*}
    a_k &\xrightarrow[k\to\infty]{L^\infty} a_{0}  &\implies u(\cdot,\cdot;a_k) &\xrightarrow[k\to\infty]{L_t^2L_\mathbf{x}^2([0,T]\times\mathcal{D})} u(\cdot,\cdot;a_0), \\
    b_k &\xrightarrow[k\to\infty]{L^\infty} b_{0}=a_{0} &\implies u(\cdot,\cdot;b_k) &\xrightarrow[k\to\infty]{L_t^2L_\mathbf{x}^2([0,T]\times\mathcal{D})} u(\cdot,\cdot;b_0).
\end{align*}
Then we have a sequence:
\[ \| u(\cdot,\cdot;a_k) - u(\cdot,\cdot;b_k) \|_{L_t^2L_\mathbf{x}^2([0,T]\times\mathcal{D})}\geq1 \qquad \forall k\geq K. \]
But this contradicts to the uniqueness of the solution to \eqref{eqn:heat-dynamics}.
Then we can take an interpolation between \(a\) and \(b\):
\[ c_n = \frac{K+1-n}{K+1} a + \frac{n}{K+1} b \qquad n=1,\cdots,K.  \]
The interpolation satisfies \(\|a-c_1\|\leq\frac{1}{K}\) and  \(\|c_K-b\|\leq\frac{1}{K}\).
Then using triangle inequalities with norms on \(L_t^2L_\mathbf{x}^2([0,T]\times\mathcal{D})\) abbreviated,
\begin{align*}
\| u(\cdot,\cdot;a) - u(\cdot,\cdot;b)\| =
\|u(\cdot,\cdot;a) - u(\cdot,\cdot;c_1) + u(\cdot,\cdot;c_1) - \cdots + u(\cdot,\cdot;c_{K}) - u(\cdot,\cdot;b)\| \\
\leq \|u(\cdot,\cdot;a) - u(\cdot,\cdot;c_1)\| + \|u(\cdot,\cdot;c_1) - u(\cdot,\cdot;c_2)\|  + \cdots + \|u(\cdot,\cdot;c_{K}) - u(\cdot,\cdot;b)\|
\end{align*}
Each component is bounded by \(\|a-c_1\|, \|c_1-c_2\|, \cdots,\|c_K-b\|\), with norms on \(L^\infty(\mathcal{D})\) abbreviated.
\end{proof}
\begin{remark}
\(m>0\) is a critical condition.
Otherwise, the temperature will never change from \(u_0\) where the conductivity is zero, and will eventually change even if conductivity is slightly larger than zero.
\end{remark}
\begin{remark}
The Lipschitz dependence of the solution on the conductivity ensures numerical stability: small perturbations in the non-degenerate conductivity will result in only minor changes to the temperature dynamics.
\end{remark}

\subsection{1D (special case)}
We semi-discretize \(u(t,x), a(x), f(t,x)\) using the finite difference method (FDM) which is first-order accuracy in space, to solve the forward PDE problem:
\[ u_j(t) := u(t, (j-1)\Delta x), \quad a_j := a((j-1)\Delta x), \quad f_j(t):=f(t,(j-1)\Delta x) \]
for \(j=1, 2, \cdots, J\).
Here, semi-discretize means the discretization is in space \(x\) only, not in time \(t\).
The discretization of the circle is a standard procedure where the \(x_1\) and \(x_J\) are neighbors of each other (see \cref{fig:stencil-circle}).
\begin{figure}[htbp]
\begin{subfigure}[b]{0.4\textwidth}
\begin{center}
\begin{picture}(50,50)
\put(25,25){\circle{40}}
\put(45,25){\circle*{2}}
\put(46,24){\(\scriptstyle x_1\)}

\put(42,35){\circle*{2}}
\put(44,33){\(\scriptstyle x_2\)}

\put(35,42){\circle*{2}}
\put(35,44){\(\scriptstyle x_3\)}

\put(25,45){\circle*{2}}
\put(21,46){\(\scriptstyle\cdots\)}
\put(25,5){\circle*{2}}
\put(21,0){\(\scriptstyle\cdots\)}

\put(35,8){\circle*{2}}
\put(38,6){\(\scriptstyle x_{J-1}\)}

\put(42,15){\circle*{2}}
\put(45,15){\(\scriptstyle x_J\)}
\end{picture}
\end{center}
\caption{\(\mathbb{T}\sim[0,1)\)}
\label{fig:stencil-circle}
\end{subfigure}
\begin{subfigure}[b]{0.4\textwidth}
\begin{center}
\begin{picture}(60,60)  
\put(0,0){\line(1,0){50}}  
\put(0,0){\line(0,1){50}}  
\multiput(50,0)(0,3){17}{\line(0,1){2}} 
\multiput(0,50)(3,0){17}{\line(1,0){2}} 
\put(0,0){\vector(1,0){60}} 
\put(0,0){\vector(0,1){60}} 
\put(55,5){\(x\)}
\put(5,55){\(y\)}
\multiput(0,0)(10,0){5}{\multiput(0,0)(0,10){5}{\circle*{2}}}
\put(10,6){\tiny\(\Delta x\)}
\put(10,0){\tiny\(\overbrace{\hspace{3pt}}\)}
\put(4,14){\tiny\(\Delta y\)}
\put(0,14){\tiny\(\left.\rule{0pt}{5pt}\right\}\)}
\end{picture}
\end{center}
\caption{\(\mathbb{T}^2\sim[0,1)^2\)}
\label{fig:stencil-torus}
\end{subfigure}
\caption{Parametrization of \(\mathcal{D}\) for the finite difference method
}
\label{fig:stencil}
\end{figure}

A finite difference scheme of \(\partial_x(a(x)u_x)\) in energy-conservative form is:
\begin{align*}
    \partial_x ( a(x) u_x ) &\approx D_x^- (a_j D_x^+ u_j) = D_x^- \left( a_j \cdot \frac{u_{j+1}-u_j}{\Delta x} \right) \\
    &= \frac{1}{\Delta x}\left( a_j \cdot \frac{u_{j+1}-u_j}{\Delta x} - a_{j-1}\cdot \frac{u_{j}-u_{j-1}}{\Delta x} \right) \\
    &= \frac{a_j(u_{j+1}-u_j) - a_{j-1}(u_j - u_{j-1}) }{\Delta x^2}.
\end{align*}
Then the PDE becomes a system of ODEs:
\[ \dot{u}_j(t) = \frac{1}{\Delta x^2} [ a_j u_{j+1}(t) - (a_j + a_{j-1})u_{j}(t) + a_{j-1} u_{j-1}(t)] + f_j(t), \quad j=1,\cdots,J.  \]
We can write the system in matrix form:
\begin{equation}\label{eqn:forward-ODE1}
\dot{\mathbf{u}}(t) = A(a) \mathbf{u}(t) + \mathbf{f}(t),
\end{equation}
where
\begin{align*}
\mathbf{u}(t)&=\begin{bmatrix} u_1 & u_2 & \cdots & u_{J-1} & u_{J} \end{bmatrix}^\top(t), \qquad
\mathbf{f}(t)=\begin{bmatrix} f_1 & f_2 & \cdots & f_{J-1} & f_{J} \end{bmatrix}^\top(t)\\
A(a)&=\frac{1}{\Delta x^2}
\begin{bmatrix} -(a_1 + a_J) & a_1 & && a_J  \\
a_1 & -(a_2+a_1) & a_2 \\
& \ddots & \ddots & \ddots \\
& & a_{J-2} & -(a_{J-1} + a_{J-2}) & a_{J-1} \\
a_J &&& a_{J-1} & -(a_J + a_{J-1})
\end{bmatrix}_{J\times J}
\end{align*}
By Duhamel's formula, the solution can be written as
\begin{equation}\label{eqn:forward-solver}
\mathbf{u}(t;a) = \exp(tA(a))\mathbf{u}(0) + \int_{0}^{t}\exp((t-\tau)A(a))\mathbf{f}(\tau)\mathrm{d}\tau,
\end{equation}
where \(\mathbf{u}(0) = \begin{bmatrix} u(t=0,x_1) & u(t=0, x_2) & \cdots & u(t=0,x_{J-1}) & u(t=0, x_{J})\end{bmatrix}^\top\).
This discrete initial condition can be obtained from the given initial condition evaluated on the discretized grid.
We split the variables and parameters of a function using semicolon so that \(\mathbf{u}(t;a)\) treats \(t\) as the variable and conductivity \(a\) as parameters.

The motion of the sensor is parametrized by \(s:\mathbb{R}^+\to\mathbb{T}\), and the time indices it record temperature is \(t_m\) for \(m=1,\cdots,M\), with convention that
\[ 0 < t_1 < t_2 < \cdots < t_M. \]
The recorded temperature at each time \(t_m\) is \(u(t_m, s(t_m))\).
Then we propose the following least square minimizer as the reconstructed conductivity:
\begin{equation}\label{eqn:loss-pointwise}
a^*=\underset{a\in L_+^2(\mathbb{T})}{\arg\min} \sum_{m=1}^{M} [ u(t_m, s(t_m)) - \texttt{Extract}_{t_m, s(t_m)}(\tilde{u}(\cdot, \cdot; a)) ]^2,
\end{equation}
where \(\tilde{u}(\cdot, \cdot; a)\) is the simulated temperature dynamics on \((t,x)\in[0,t_M]\times\mathbb{T}\) using the candidate conductivity \(a\).
We use the nearest neighbor on the grid to represent \(s(t_m)\),
\[  j_m:= \underset{j\in\{1,2,\cdots,J\}}{\arg \min} | x_j - s(t_m) | \implies x_{j_m}\approx s(t_m), \qquad m=1,2,\cdots, M. \]
Let \(\mathbf{e}_j\) denote the \(j^\text{th}\) standard basis for \(\mathbb{R}^{J}\) (one-hot vector with 1 in \(j^\text{th}\) coordinate), then we can explicitly define
\(\texttt{Extract}_{t_m, s(t_m)}(\tilde{u}(\cdot, \cdot; a)) := \mathbf{e}_{j_m}^\top \mathbf{u}(t_m; a)\).
\begin{remark}
Although the formulation above is expressed as a least-square minimization, it is crucial to recognize that reconstructing the conductivity \(a\) (or \(a_1,\cdots,a_J\) after discretization) is not a linear least-square problem.
This distinction arises because the governing PDE \eqref{eqn:heat-dynamics} is linear with respect to the solution \(u\), but it is nonlinear with respect to the parameter \(a\), which can be directly seen from \eqref{eqn:forward-solver}.
Consequently, standard linear least-square techniques are not directly applicable, and specialized numerical methods must be employed to solve for the optimal \(a^*\).
\end{remark}

\subsection{Sensitivity analysis}
Based on the spectral analysis of \(A(a)\)\footnote{For simplicity, we omit the explicit reference to the \(a\)-dependence of \(A\) for the remainder of the 1D problem.}, measurements taken at the same location over a later time become less sensitive to reconstruction errors, given that the external heat source is spatially constant.
Formally, we propose the following:
\begin{theorem}
Let the eigenvalue decomposition of \(A(a)\) in \eqref{eqn:forward-ODE1} (which is Hermitian) be
\begin{align*}
    A = \frac{1}{\Delta x^2} Q \Lambda Q^\top,
\quad \text{where} \quad Q &= \begin{bmatrix} v_1 & v_2 & \cdots & v_{J-1} & v_{J} \end{bmatrix}_{J\times J}, \\
\quad \text{and} \quad \Lambda &= \mathtt{diag}(\lambda_1,\cdots,\lambda_J),
\end{align*}
with the eigenvalues sorted in ascending order:
\[ \lambda_1 \leq \lambda_2 \leq \cdots \leq \lambda_{J-1} < \lambda_J = 0. \]
Suppose the external heat source is spatially constant, i.e., \(f(t,x) = f(t)\) for all \((t,x) \in \mathbb{R}^+ \times \mathbb{T}\).
If two measurements, \(u(t_m, s(t_m))\) and \(u(t_n, s(t_n))\), where \(1 \leq m < n \leq M\), are taken at the same position but at different times, namely
\[ t_n > t_m > \frac{\Delta x^2}{\sqrt{-\lambda_k}}, \quad s(t_n) = s(t_m), \quad \text{for some } k = 1, 2, \cdots, J-1, \]
then the measurement \(u(t_m, s(t_m))\) is more sensitive to perturbations in \(\lambda_k\) than \(u(t_n, s(t_n))\).
\end{theorem}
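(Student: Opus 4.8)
\medskip
\noindent\emph{Proof strategy.}
The plan is to diagonalize the semi-discrete evolution, isolate the single spectral mode $v_{k}$, and reduce the statement to the monotonicity of one scalar function of time. Substitute the eigendecomposition $A=\frac{1}{\Delta x^{2}}Q\Lambda Q^{\top}$ into Duhamel's formula \eqref{eqn:forward-solver} and expand the initial data as $\mathbf u(0)=\sum_{j}c_{j}v_{j}$ with $c_{j}:=v_{j}^{\top}\mathbf u(0)$. The structural fact to exploit is that every row of $A$ in \eqref{eqn:forward-ODE1} sums to zero, so $A\mathbf 1=0$; hence $\mathbf 1$ spans $\ker A$ and is parallel to the zero-eigenvalue eigenvector $v_{J}$. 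Since the source is spatially constant, $\mathbf f(t)=f(t)\mathbf 1$ has no component along $v_{k}$ for $k\le J-1$, so Duhamel's integral never touches those modes and
\[
v_{k}^{\top}\mathbf u(t)=c_{k}\,e^{t\lambda_{k}/\Delta x^{2}}\ \ (k\le J-1),\qquad
\mathbf e_{j}^{\top}\mathbf u(t)=\big(\text{zero-mode part, independent of every }\lambda_{k}\big)+\sum_{k=1}^{J-1}c_{k}(v_{k})_{j}\,e^{t\lambda_{k}/\Delta x^{2}}.
\]
This is precisely where the hypothesis that $f$ is spatially constant enters: it decouples the forcing from $v_{1},\dots,v_{J-1}$, so the dependence of a measurement on $\lambda_{k}$ lives in a single clean exponential term.

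Next I would make ``sensitivity to a perturbation in $\lambda_{k}$'' concrete through the structured perturbation $\delta A=\frac{\delta\lambda_{k}}{\Delta x^{2}}v_{k}v_{k}^{\top}$, which shifts $\lambda_{k}$ while fixing the other eigenpairs to first order, and solve the variational equation $\frac{\mathrm d}{\mathrm d t}\,\delta\mathbf u=A\,\delta\mathbf u+\delta A\,\mathbf u$ with $\delta\mathbf u(0)=0$ (the initial data is common to both conductivities). Projecting onto $v_{k}$ and using $v_{k}^{\top}\mathbf u(\tau)=c_{k}e^{\tau\lambda_{k}/\Delta x^{2}}$ from the previous step, the resulting forced scalar ODE integrates in one line to
\[
\frac{\partial}{\partial\lambda_{k}}\big[\mathbf e_{j}^{\top}\mathbf u(t)\big]=\frac{c_{k}(v_{k})_{j}}{\Delta x^{2}}\;t\,e^{t\lambda_{k}/\Delta x^{2}},
\]
which is also what one gets by differentiating the closed form above directly.

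Finally comes the comparison. Because $s(t_{n})=s(t_{m})$ the nearest-grid indices coincide, $j_{n}=j_{m}=:j$, so the two sensitivities differ only through $h(t):=t\,e^{t\lambda_{k}/\Delta x^{2}}$, i.e.\ $|\partial_{\lambda_{k}}u(t_{m},s(t_{m}))|\big/|\partial_{\lambda_{k}}u(t_{n},s(t_{n}))|=h(t_{m})/h(t_{n})$, and the assertion is $h(t_{m})>h(t_{n})$. Since $h'(t)=e^{t\lambda_{k}/\Delta x^{2}}\big(1+t\lambda_{k}/\Delta x^{2}\big)$ and $\lambda_{k}<0$, the function $h$ is strictly decreasing once $t$ passes the critical time where $1+t\lambda_{k}/\Delta x^{2}$ changes sign; the single explicit check is that the hypothesis $t_{m}>\Delta x^{2}/\sqrt{-\lambda_{k}}$ places $t_{m}$, hence $t_{n}>t_{m}$, in that decreasing regime, and strict monotonicity then closes the argument.

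The step I expect to be the main obstacle is the second one, namely giving ``a perturbation in $\lambda_{k}$'' a defensible meaning: an actual change of $a$ perturbs all eigenpairs of $A$ at once, so one must either take the rank-one perturbation above as the operative definition, or invoke first-order eigenvalue perturbation theory and argue that the contributions coming from $\partial v_{j}$, $j\ne k$, are orthogonal to or of lower order than the $\lambda_{k}$-channel isolated here. A secondary point is non-degeneracy: the comparison is informative only when $c_{k}=v_{k}^{\top}\mathbf u(0)\ne0$ and $(v_{k})_{j}\ne0$, which should be recorded as a mild genericity condition on the initial temperature and the sensor path. Everything else is routine bookkeeping with the eigenexpansion.
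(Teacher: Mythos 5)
Your proposal follows essentially the same route as the paper: diagonalize $\exp(tA)$, use $A\mathbf 1=0$ to confine the spatially constant forcing to the zero mode, differentiate the measurement with respect to $\lambda_k$ to obtain the factor $\frac{t}{\Delta x^2}e^{t\lambda_k/\Delta x^2}\,\mathbf e_{j_m}^\top v_kv_k^\top\mathbf u(0)$, and compare the scalar function $h(t)=t\,e^{t\lambda_k/\Delta x^2}$ at $t_m$ and $t_n$. Two of your additions genuinely strengthen the argument. The paper simply differentiates the closed form with respect to $\lambda_k$ as a free parameter; your rank-one perturbation $\delta A=\frac{\delta\lambda_k}{\Delta x^2}v_kv_k^\top$ (which shifts $\lambda_k$ \emph{exactly} while leaving the other eigenpairs untouched, since $A+\delta A=\frac{1}{\Delta x^2}Q(\Lambda+\delta\lambda_k\,\mathbf e_k\mathbf e_k^\top)Q^\top$) together with the variational equation gives that differentiation an operational meaning, and your one-line Duhamel integral reproduces the paper's formula. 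You also record the non-degeneracy conditions $v_k^\top\mathbf u(0)\neq0$ and $(v_k)_{j_m}\neq0$, which the paper omits and without which ``more sensitive'' is vacuous because both sensitivities are zero.

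The step you defer as ``the single explicit check'' is exactly where the argument does not close as written. Your derivative $h'(t)=e^{t\lambda_k/\Delta x^2}\bigl(1+t\lambda_k/\Delta x^2\bigr)$ is correct, but its zero is at $t=\Delta x^2/(-\lambda_k)$, not at the theorem's threshold $\Delta x^2/\sqrt{-\lambda_k}$; the two agree only when $-\lambda_k=1$, and when $-\lambda_k<1$ (the regime actually arising here, since the eigenvalues of $\Delta x^2A$ lie in roughly $[-4\sup a,0]$ with $a\sim10^{-2}$) the hypothesis $t_m>\Delta x^2/\sqrt{-\lambda_k}$ does \emph{not} place $t_m$ past the critical time. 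This defect is inherited from the theorem statement itself: the paper's own proof miscomputes the derivative as $\frac{1}{\Delta x^2}(1+\lambda_k t^2/\Delta x^2)e^{t\lambda_k/\Delta x^2}$, whose sign change would occur at yet a third time, $\Delta x/\sqrt{-\lambda_k}$. The monotonicity mechanism is right in both your write-up and the paper's, but to make the implication true the threshold should read $\Delta x^2/(-\lambda_k)$; with that correction (and your stated genericity conditions) your argument is complete.
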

\begin{proof}
Notice that the row sums of \(A\) are zero\footnote{This is also why we chose periodic boundary conditions throughout this paper and leave other boundary conditions for future work.}, which gives the dominant eigen-pair \(\lambda_J = 0, v_J = (1/\sqrt{J}) \mathbf{1}_J\), where \(\mathbf{1}_J \in \mathbb{R}^{J}\) denotes the all-one vector.
This eigen-pair corresponds to the constant temperature being a solution to the spatially constant external heat source version of \eqref{eqn:heat-dynamics} with constant initial data.
Hence, \eqref{eqn:forward-solver} becomes
\[ \mathbf{u}(t; a) = \exp(t A) \mathbf{u}(0) + \int_{0}^{t} e^{t-\tau} f(\tau)\mathrm{d}\tau \cdot \mathbf{1}_J. \]
Using spectral decomposition on \(\exp(t_m A)\),
\begin{align*}
    \exp(t_m A) &= Q ~ \mathtt{diag}\left(\exp\left[\frac{t_m}{\Delta x^2} \lambda_1 \right], \cdots, \exp\left[\frac{t_m}{\Delta x^2} \lambda_J \right]\right) Q^\top
    = \sum_{j=1}^{J} \exp\left(\frac{t_m}{\Delta x^2} \lambda_j\right) v_j v_j^\top \mathbf{u}(0).
\end{align*}
Measurement \(u(t_m, s(t_m))\) can be extracted from \(\mathbf{u}(t_m; a)\) by the dot product with \(\mathbf{e}_{j_m}\):
\[ u(t_m, s(t_m)) = \mathbf{e}_{j_m}^\top \sum_{j=1}^{J} \exp\left(\frac{t_m}{\Delta x^2} \lambda_j\right) v_j v_j^\top \mathbf{u}(0) + \int_{0}^{t_m} e^{t_m-\tau} f(\tau) \, \mathrm{d}\tau. \]
We take derivatives with respect to \(\lambda_k\) to analyze the sensitivity:
\begin{align*}
    \frac{\partial u(t_m, s(t_m))}{\partial \lambda_k} &= \frac{\partial}{\partial \lambda_k} \exp\left(\frac{t_m}{\Delta x^2} \lambda_k\right) \mathbf{e}_{j_m}^\top v_k v_k^\top \mathbf{u}(0)\\
    &= \frac{t_m}{\Delta x^2} \exp\left(\frac{t_m}{\Delta x^2} \lambda_k\right) \mathbf{e}_{j_m}^\top v_k v_k^\top \mathbf{u}(0).
\end{align*}
Similarly,
\[ \frac{\partial u(t_n, s(t_n))}{\partial \lambda_k} = \frac{t_n}{\Delta x^2} \exp\left(\frac{t_n}{\Delta x^2} \lambda_k\right) \mathbf{e}_{j_n}^\top v_k v_k^\top \mathbf{u}(0), \]
and since the position extraction vectors are the same, \textit{i.e.}, \(\mathbf{e}_{j_m} = \mathbf{e}_{j_n}\), by \(s(t_m) = s(t_n)\), it suffices to compare the coefficients:
\[ \frac{t_m}{\Delta x^2} \exp\left(\frac{t_m}{\Delta x^2} \lambda_k\right) > \frac{t_n}{\Delta x^2} \exp\left(\frac{t_n}{\Delta x^2} \lambda_k\right), \]
which is clear from the monotonically decreasing region of the rescaled function \(x\mapsto x\exp(\lambda x)\):
\[ \frac{\mathrm{d}}{\mathrm{d}t} \left[ \frac{t}{\Delta x^2} \exp\left(\frac{t}{\Delta x^2} \lambda_k\right) \right] = \frac{1}{\Delta x^2} \left( 1 + \lambda_k \frac{t^2}{\Delta x^2} \right) \exp\left(\frac{t}{\Delta x^2} \lambda_k\right) < 0, \quad \forall t > \frac{\Delta x^2}{\sqrt{-\lambda_k}}. \]
\end{proof}
\begin{remark}
From the calculation above, we also conclude that when \(t_m\gg1\),
\[ u(t_m, s(t_m)) - \underbrace{\frac{1}{J}\mathbf{1}_J^\top \mathbf{u}(0)}_\text{average} - \underbrace{\int_{0}^{t_m} e^{t_m-\tau} f(\tau) \, \mathrm{d}\tau}_\text{external heat}  = \sum_{j=1}^{J-1} \underbrace{\exp\left(\frac{t_m}{\Delta x^2} \lambda_j\right)}_{\approx0} \mathbf{e}_{j_m}^\top v_j v_j^\top \mathbf{u}(0). \]
\end{remark}
\begin{remark}\label{rmk:sensitivity}
The sensitivity can be understood as follows: if there is a small perturbation in the reconstructed conductivity from the true conductivity \(a\) (in terms of the eigenvalues of \(A(a)\)), then the calculated temperature based on the perturbed conductivity will significantly deviate from the actual measurement.
We can see from the proof that the sensitivity with respect to time goes up and down and acts like the function \(t\mapsto  t\exp(\lambda_k t)\).
This implies that if the measurement is taken too early, the temperature is primarily influenced by the initial condition.
Conversely, if the measurement is taken too late, the temperature is dominated by the average temperature and the accumulated effect of the external heat source (i.e., equilibrium).
In both cases, the temperature is mainly driven by the initial condition, which is independent of the conductivity. 
Therefore, there exists a time interval during which the temperature measurement is highly sensitive to errors in the reconstruction, making this period crucial for accurate conductivity estimation.
Moreover, this interval is shorter for smaller \(\lambda_k\).
In practice, if the measurements are taken too early for \(\lambda_k\)'s, we can always continue recording until the system enters the highly sensitive interval.
However, if the measurement is taken too late for \(\lambda_{J-1}\), then it's also too late for all other smaller \(\lambda_k\)'s, which can be treated as garbage data, and the reconstruction of conductivity is inevitably prone to large errors.
\end{remark}

\subsection{Extension to 2D space}
Consider \eqref{eqn:heat-dynamics} on \(\mathbb{T}^2=\mathbb{R}^2/\mathbb{Z}^2\sim[0, 1)^2\) with \(\mathbf{x}=(x,y)\):
\begin{equation}\label{eqn:heat-dynamics-2D}
    u_t(t,x,y) = \partial_x[a(x,y) \partial_x u] + \partial_y[a(x,y) \partial_y u] + f(t,x,y).
\end{equation}
Again, \(u(t,x,y)\) is the temperature at time \(t\) and spatial position \((x,y)\) and \(a(x,y)\) is the surface thermal conductivity.
We inherit the 1D convention that \(x_i=(i-1)\Delta x\) and \(y_j=(j-1)\Delta y\) for \(i\in\{1,2,\cdots,K\}\) and \(j\in\{1,2,\cdots,J\}\), where \(K\Delta x=1\) and \(J\Delta y=1\) (so that they are grid points on a uniform grid on the unit square).
Similar as 1D case, we semi-discretize the spatial domain according to \cref{fig:stencil-torus} into
\[ u_{i,j}(t):= u(t, x_i, y_j), \qquad a_{i,j}:=a(x_i,y_j), \qquad f_{i,j}(t):=f(t, x_i, y_j). \]
We will discretize the right-hand side of \eqref{eqn:heat-dynamics-2D} as follows \footnote{Recall that for economy of notation we drop explicit dependency of \(u_{i,j}\) and \(f_{i,j}\) on \(t\)}:
\begin{align*}
D_x^-\left(a_{i,j}D_x^+ u_{i,j}\right)+D_y^-\left(a_{i,j}D_y^+ u_{i,j}\right)
&\approx\frac{1}{\Delta x}\left[a_{i,j}\cdot \frac{u_{i+1,j} - u_{i,j}}{\Delta x} - a_{i-1,j}\cdot\frac{u_{i,j}-u_{i-1,j}}{\Delta x} \right] \\
&+ \frac{1}{\Delta y}\left[a_{i,j}\cdot \frac{u_{i,j+1} - u_{i,j}}{\Delta y} - a_{i,j-1}\cdot\frac{u_{i,j}-u_{i,j-1}}{\Delta y} \right].
\end{align*}
For simplicity, we take \(\Delta x=\Delta y\), which implies \(K=J\), leading to 
\begin{multline*}
\nabla\cdot[a(x_i,y_j)\nabla u(t,x_i,y_j)]\approx
\frac{1}{\Delta x^2}\left[ a_{i,j}(u_{i+1,j}-u_{i,j}) - a_{i-1,j}(u_{i,j}-u_{i-1,j})\right.\\
\left. + a_{i,j}(u_{i,j+1} - u_{i,j}) -a_{i,j-1}(u_{i,j} - u_{i,j-1}) \right].
\end{multline*}
Then we have a system of ODEs (with \(i,j\in\{1,2,\cdots,J\}\)):
\begin{multline}\label{eqn:forward-ODE2}
\dot{u}_{i,j} = \frac{1}{\Delta x^2}[ a_{i,j} u_{i+1,j} + a_{i-1,j} u_{i-1,j} + a_{i,j} u_{i,j+1} \\
+ a_{i,j-1} u_{i,j-1} -(2a_{i,j} + a_{i-1,j} + a_{i,j-1}) u_{i,j} ] + f_{i,j}.
\end{multline}
This system is of the form of \eqref{eqn:forward-ODE1} where \(A(a)\in\mathbb{R}^{J^2\times J^2}\).
The unique solution can again be obtained by Duhamel's formula, and we denote it as \(u(\cdot,\cdot;a)\).
Then we consider the observation of a sensor whose position at time \(t\) is \(s(t)\in\mathbb{T}^2\).
Therefore, the recorded temperature by this sensor is \(\{u(t_m, s(t_m))\}_{m=1,\cdots,M}\).
Similar as 1D, we need the conductivity that minimizes the difference between the recorded temperature and simulated temperature based on the candidate conductivity \(a\), so we use exactly the same minimizer of \eqref{eqn:loss-pointwise},
where the \texttt{Extract} operation is also based on nearest neighbor approximation, which implies \(s(t_m) \approx (x_{i_m}, y_{j_m})\).

\section{Sensor Motion, Initial Condition, and External Heat Source}\label{sec3}

Other than the temperature \(u\) and conductivity \(a\), the terms in the PDE need to be considered are:
\begin{itemize}
    \item Sensor motion: \(s(t)\);
    \item Initial condition \(u(t=0,\cdot)\);
    \item External heat source \(f(t,\cdot)\).
\end{itemize}

\subsection{Design of Sensor Motion}
The sensitivity analysis above provides insight into the advantages of using moving sensors over static ones.
While static sensors are limited to detecting reconstruction errors in conductivity within their immediate vicinity, moving sensors are able to detect errors across the entire region they traverse.
As mentioned in Remark~\ref{rmk:sensitivity}, there is a specific time interval during which temperature measurements are highly sensitive to reconstruction errors in conductivity.
During this critical period, the more of the spatial domain the sensors traverse, the better the reconstruction quality.
To fully exploit this sensitivity window, it is advantageous for moving sensors to cover the entire spatial domain.
This approach ensures that sensors gather comprehensive data from across the domain during the most sensitive phase, thereby improving the precision and accuracy of the reconstruction.

\subsubsection{Sensor Placement and Movements for \texorpdfstring{\(\mathcal{D} = \mathbb{T}\)}{D = T}}
In one-dimensional settings, a single sensor can effectively traverse the entire domain by moving along \(\mathbb{T}\) at a constant speed over the unit time interval \(t \in [0,1]\) (by setting \(t_M=1\)).
This approach ensures ergodicity, as the sensor returns to its initial position after covering the entire domain, thereby maximizing its ability to capture detailed temperature data within the given timeframe.

\subsubsection{Sensor Placement and Movements for \texorpdfstring{\(\mathcal{D} = \mathbb{T}^2\)}{D = T2}}\label{subsubsec}
Extending the methodology of using a single sensor to traverse the domain to higher dimensions presents significant challenges.
In dimensions greater than one, smooth curves occupy zero measure \citep{bams/1183504867}, meaning that a sensor following such a trajectory would collect temperature data from an infinitesimally small portion of the domain.
Attempts to traverse space-filling curves (or their finite fractal levels) within the high-sensitivity window leads to infinite (or large) moving speeds of the sensor, which leads to physical infeasibility.
Consequently, the data gathered would be insufficient for accurately reconstructing the conductivity across the entire spatial region, limiting the reconstruction accuracy to only a localized area.

Given these constraints, the physical movement of sensors must prioritize continuity and regularity, often restricting trajectories to piecewise constant functions with bounded jumps after spatial discretization.
This means that the sensor either moves between adjacent pixels or stays in place, i.e.,
\[ |s(t_{m}) - s(t_{m-1})|_{\text{Manhattan on discretized }\mathbb{T}^2} \leq \Delta x, \quad m = 1, \cdots, M. \]
This limitation makes it impossible for a single sensor to cover the entire discretized domain \(\mathbb{T}^2\) within a reasonable timeframe.
Furthermore, temperature measurements taken at later times tend to contribute less effectively to the reconstruction of the conductivity.

To overcome these limitations, deploying multiple temperature sensors becomes essential.
By distributing sensors across different locations and ensuring their trajectories cover reasonably large regions of the domain, broader spatial coverage is achieved early in the measurement process.
This strategic deployment enhances the accuracy of thermal conductivity reconstruction and mitigates the inefficiencies associated with single-sensor approaches.
Therefore, utilizing multiple sensors with well-planned trajectories is crucial for comprehensive and reliable data collection in higher-dimensional domains.

\begin{figure}[htbp]
\begin{center}
\begin{picture}(100,100)  
\put(0,0){\line(1,0){80}}  
\put(0,0){\line(0,1){80}}  

\put(10,90){\circle*{4}} 
\put(10,90){\vector(0,-1){8}}
\multiput(10,10)(0,4){20}{\line(0,1){2}} 
\multiput(10,10)(4,0){20}{\line(1,0){2}} 
\multiput(90,10)(0,4){20}{\line(0,1){2}} 
\multiput(10,90)(4,0){20}{\line(1,0){2}} 

\put(17,75){*}
\put(20,80){\vector(0,-1){12}}
\multiput(20,20)(0,4){15}{\line(0,1){2}} 
\multiput(20,20)(4,0){15}{\line(1,0){2}} 
\multiput(80,20)(0,4){15}{\line(0,1){2}} 
\multiput(20,80)(4,0){15}{\line(1,0){2}} 

\put(28,68){\rule{4pt}{4pt}} 
\put(30,70){\vector(0,-1){16}}
\multiput(30,30)(0,4){10}{\line(0,1){2}} 
\multiput(30,30)(4,0){10}{\line(1,0){2}} 
\multiput(70,30)(0,4){10}{\line(0,1){2}} 
\multiput(30,70)(4,0){10}{\line(1,0){2}} 

\put(40,60){\circle{4}} 
\put(40,60){\vector(0,-1){16}}
\multiput(40,40)(0,4){5}{\line(0,1){2}} 
\multiput(40,40)(4,0){5}{\line(1,0){2}} 
\multiput(60,40)(0,4){5}{\line(0,1){2}} 
\multiput(40,60)(4,0){5}{\line(1,0){2}} 

\put(0,0){\vector(1,0){100}} 
\put(95,3){\(x\)}
\put(20,0){\circle*{2}}\put(15,2){\scriptsize 1/5}
\put(40,0){\circle*{2}}\put(35,2){\scriptsize 2/5}
\put(60,0){\circle*{2}}\put(55,2){\scriptsize 3/5}
\put(80,0){\circle*{2}}\put(75,2){\scriptsize 4/5}

\put(0,0){\vector(0,1){100}} 
\put(3,95){\(y\)}
\put(0,20){\circle*{2}}\put(1,18){\(\tfrac{1}{5}\)}
\put(0,40){\circle*{2}}\put(1,38){\(\tfrac{2}{5}\)}
\put(0,60){\circle*{2}}\put(1,58){\(\tfrac{3}{5}\)}
\put(0,80){\circle*{2}}\put(1,78){\(\tfrac{4}{5}\)}
\end{picture}
\end{center}
\caption{Initial positions \(s_{1}(0),\cdots,s_{4}(0)\) and trajectories of the four sensors on \(\mathbb{T}^2\)}
\label{fig:sensor-init_positions}
\end{figure}
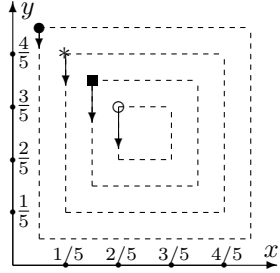
As shown in \cref{fig:sensor-init_positions}, we place four sensors at the corners of four concentric squares, each sensor moving along its respective orbit.
These squares are referred to as the \emph{orbits} of the sensors, as they always move along the sides of their initial paths.
The side lengths of these four squares are \(\tfrac{1}{5}\), \(\tfrac{2}{5}\), \(\tfrac{3}{5}\), and \(\tfrac{4}{5}\), from the innermost to the outermost square, all centered at \((x = \tfrac{1}{2}, y = \tfrac{1}{2})\).
Each sensor starts from a corner of its orbit, and the initial positions are shown as \(\raisebox{-0.3ex}{\scalebox{2}{$\cdot$}}\), *, \(\scalebox{0.8}{\(\blacksquare\)}\) and \(\circ\), respectively, along with the contours of the initial temperature.

The sensors travel counterclockwise along their orbits to the next corner, continuing this pattern until \(t = 1\).
At \(t = 1\), they have completed 8, 4, 2, and 1 cycles along their orbits, respectively, from the innermost to the outermost square, all moving at constant speeds.
By this design, we ensure that every point in \(\mathbb{T}^2\) will be approached by at least one sensor during its high-sensitivity window.

\subsection{Initial Condition and External Heat Source}
We also consider the frequency domain for the 1D spatial domain, where we take \(x \in [0,1)\) from the representative interval of \(\mathbb{T}\):
\begin{align*}
    u(t,x) &= \sum_{n \in \mathbb{Z}} \hat{\mathbf{u}}_n(t)\, e^{2\pi i n x}, &
    \hat{\mathbf{u}}_n(t) &= \int_{0}^{1} u(t,x)\, e^{-2\pi i n x}\, \mathrm{d}x, &
    \hat{\mathbf{u}}(t) &\in \ell^2(\mathbb{Z}) \quad \forall t \in [0,\infty), \\
    f(t,x) &= \sum_{n \in \mathbb{Z}} \hat{\mathbf{f}}_n(t)\, e^{2\pi i n x}, &
    \hat{\mathbf{f}}_n(t) &= \int_{0}^{1} f(t,x)\, e^{-2\pi i n x}\, \mathrm{d}x, &
    \hat{\mathbf{f}}(t) &\in \ell^2(\mathbb{Z}) \quad \forall t \in [0,\infty),\\
    a(x) &= \sum_{n \in \mathbb{Z}} \hat{a}(n)\ e^{2\pi i n x}, &
    \hat{a}(n), &= \int_{0}^{1} a(x)\, e^{-2\pi i n x}\, \mathrm{d}x &
    \hat{a} &\in \ell^2(\mathbb{Z}).
\end{align*}
Here, \(\hat{\mathbf{u}}(t)\), \(\hat{\mathbf{f}}(t)\), and \(\hat{a}\) denote the Fourier coefficients of \(u(t,\cdot)\), \(f(t,\cdot)\), and \(a\) respectively.
Substituting these expansions into Eq.~\eqref{eqn:heat-dynamics} transforms it into an infinite system of ODEs:
\[ \dot{\hat{\mathbf{u}}}_n(t) = -4\pi^2 n \sum_{m \in \mathbb{Z}} m\, \hat{a}(n - m)\, \hat{\mathbf{u}}_{m}(t) + \hat{\mathbf{f}}_{n}(t), \qquad \forall n \in \mathbb{Z}. \]
This can be written in matrix form:
\begin{equation}\label{eqn:forward-Fourier-diff}
\dot{\hat{\mathbf{u}}}(t) = -4\pi^2 D \hat{a}(Z) D\, \hat{\mathbf{u}}(t) + \hat{\mathbf{f}}(t), 
\end{equation}
where \(\hat{a}(Z)\) is a bi-infinite Toeplitz matrix\footnote{This is another reason why we prioritize periodic boundary conditions over other boundary conditions.} whose entries are given by \(\hat{a}(n - m)\) for each pair \((n, m) \in \mathbb{Z}^2\).
The diagonal matrix \(D\) and the bi-infinite Toeplitz matrix \(Z\) are defined as
\[ D = \begin{bmatrix}
    \ddots  &   &   &   &   &   & \\
            & -2 &   &   &   &   & \\
            &    & -1 &   &   &   & \\
            &    &    & 0 &   &   & \\
            &    &    &   & 1 &   & \\
            &    &    &   &   & 2 & \\
            &    &    &   &   &   & \ddots
\end{bmatrix}_{\mathbb{Z} \times \mathbb{Z}}, \qquad
Z = \begin{bmatrix}
    \ddots  & \vdots &  \vdots &  \vdots &  \vdots & \vdots & \\
    \cdots  &      0 &      -1 &      -2 &      -3 &    -4 & \cdots \\
    \cdots  &      1 &       0 &      -1 &      -2 &    -3 & \cdots \\
    \cdots  &      2 &       1 &       0 &      -1 &    -2 & \cdots \\
    \cdots  &      3 &       2 &       1 &       0 &    -1 & \cdots \\
    \cdots  &      4 &       3 &       2 &       1 &     0 & \cdots \\
            & \vdots &  \vdots &  \vdots &  \vdots & \vdots & \ddots
\end{bmatrix}_{\mathbb{Z} \times \mathbb{Z}}. \]
Again, by Duhamel's formula,
\begin{equation}\label{eqn:forward-Fourier}
\hat{\mathbf{u}}(t; a) = \exp\left[-4\pi^2 t\, D \hat{a}(Z) D \right] \hat{\mathbf{u}}(0) + \int_{0}^{t} \exp\left[-4\pi^2 (t - \tau)\, D \hat{a}(Z) D \right] \hat{\mathbf{f}}(\tau)\, \mathrm{d}\tau.
\end{equation}
Notice that \(\ker(D) = \textrm{span}\{\mathbf{e}_0\}\), which again confirms Proposition~\ref{prop:non-recoverability}.
Since we need at least one of the initial condition or the external heat source to be non-constant in space, we choose the initial condition to be non-constant while keeping the external heat source constant in space, \textit{i.e.}, \(f(t)=f(t,x)\) on \((t, x)\in[0,\infty)\times\mathbb{T}\).
This simplifies the problem to:
\[ \hat{\mathbf{u}}(t; a) = \exp\left[-4\pi^2 t\, D \hat{a}(Z) D \right] \hat{\mathbf{u}}(0) + \mathbf{e}_0\int_{0}^{t} e^{-4\pi^2 (t - \tau)} f(\tau)\, \mathrm{d}\tau . \]
As for the initial condition, we choose the lowest frequency based on the Fourier coefficients:
\[ \hat{\mathbf{u}}(0) = -\frac{i}{2}(\mathbf{e}_{1} - \mathbf{e}_{-1}) \iff u(t=0, x) = \sin(2\pi x), \qquad x\in\mathbb{T}. \]
In the 2D problem, we adopt the same criteria:
\begin{equation}\label{eqn:heat-init2}
u(t=0, x, y) = \cos(2\pi x) \cos(2\pi y),  \qquad (x,y)\in\mathbb{T}^2.
\end{equation}
As long as the external heat source is spatially constant, it neither improves nor worsens the resolution of the reconstruction.
Therefore, we choose the following form for the external heat source:
\begin{align}
    f(t,x)&= \sin(\pi t), & (t,x)\in[0,\infty)\times\mathbb{T}; \label{eqn:heat-source1} \\
    f(t,x,y)&= \sin(2\pi t), & (t,x,y)\in[0,\infty)\times\mathbb{T}^2 \label{eqn:heat-source2}.
\end{align}

\section{Overcoming Sparsity in Temperature Measurements}\label{sec4}

\subsection{Reduction of Degrees of Freedom (DoF) for \texorpdfstring{\(\mathcal{D} = \mathbb{T}\)}{D = T}}
The inverse heat equation poses significant challenges due to the non-convexity of the loss function \eqref{eqn:loss-pointwise}, rendering analytical minimization approaches often infeasible.
Additionally, the number of temperature measurements \(M\) is typically less than the desired spatial resolution \(J\) (\(M < J\)), leading to over-parameterization if all degrees of freedom are considered.
To address this, we restrict our search to low-dimensional subspaces of \(L^2(\mathbb{T})\) by parameterizing the logarithm of the conductivity \(a(\cdot)\) using Fourier series:
\[ \ln(a(x; \theta)) = \theta_0 + \sum_{k=1}^{\lfloor \dim\theta / 2 \rfloor} \theta_{2k-1} \sin(2k \pi x) + \sum_{k=1}^{\lfloor \dim\theta / 2 \rfloor} \theta_{2k} \cos(2k \pi x), \]
which is inspired from the bi-infinite Toeplitz matrix \(\hat{a}(Z)\) in \eqref{eqn:forward-Fourier}.
This parameterization allows flexibility in \(\dim\theta\), constrained by an upper bound to reflect the typical regularity of \(a(\cdot)\) or the presence of only a few abrupt transitions corresponding to material changes.
This flexibility allows us to avoid over-parameterization by setting \(\dim\theta \ll M\), even when the number of temperature measurements is limited.
Consequently, we refer to \(\theta\) as the \emph{model parameter}, as it governs the reduced-dimensional representation of the conductivity, ensuring flexibility and avoiding over-parameterization while maintaining the positivity of \(a(\cdot)\) without additional constraints, in line with the conventions in \citet{Stuart_2010} and \citet{BESKOS2017327}.

In the machine learning framework, the loss function for a candidate \(\theta\) can be derived from \eqref{eqn:loss-pointwise} as
\begin{equation}\label{eqn:loss1}
    L(\theta) := \frac{1}{M}\sum_{m=1}^{M} \left[ u(t_m, s(t_m)) - \mathbf{e}_{j_m}^\top \mathbf{u}(t_m; a(\cdot; \theta)) \right]^2,
\end{equation}
which is the standard mean-square error (MSE) between the measured temperature and calculated temperature based on reconstructed conductivity.

Notice that incrementally increasing the number of Fourier modes in the search for \(\ln(a)\)'s coefficients corresponds to retaining additional diagonals in the bi-infinite Toeplitz matrix \(\hat{a}(Z)\) as defined in \eqref{eqn:forward-Fourier}.
Specifically, a search dimension of \(\dim\theta = 2k + 1\) includes the main diagonal and its \(k\) nearest diagonals on either side, while truncating all other diagonals to zero.
For example:
\begin{itemize}
    \item \(\dim\theta = 1\): Retains only the main diagonal \(\hat{a}(0)\).
    \item \(\dim\theta = 3\): Incorporates the main diagonal and the two adjacent diagonals \(\hat{a}(0)\), \(\hat{a}(-1)\), \(\hat{a}(1)\).
    \item \(\dim\theta = 5\): Includes four nearest diagonals around the main diagonal, \(\hat{a}(0)\), \(\hat{a}(-1)\), \(\hat{a}(1)\), \(\hat{a}(-2)\), \(\hat{a}(2)\).
\end{itemize}
This systematic inclusion of diagonals allows for progressively capturing more complex features of conductivity, enhancing the reconstruction accuracy as the search dimension increases.
Thus, we propose to adaptively search the Fourier series coefficients from low frequency to high frequency; if the gradient with respect to the current coefficients nearly vanishes, we then allow higher-frequency coefficients to deviate from zero.

\begin{algorithm}
\caption{Adaptive Fourier Series Gradient Descent}\label{alg:afs}
\begin{algorithmic}[1]
\Procedure{AdaptiveFS-GD}{$N, \gamma, \epsilon, \texttt{MaxEpoch}$}
    \State \(\theta \gets (0,)\) \Comment{Initial guess \(\theta_0 = 0\) with \(\dim\theta=1\)}
    \For{\(n = 1, \ldots, \texttt{MaxEpoch}\)}
        \State \(g \gets \nabla_\theta L(\theta)\)  \Comment{\texttt{loss.backward()}}
        \If{\(\|g\|^2 < \epsilon\) and \(\dim(\theta) \leq 2N\)}
            \State \(\theta \gets (\theta, 0)\) \Comment{Include more Fourier modes}
        \Else
            \State \(\theta \gets \theta - \gamma g\) \Comment{Steepest/gradient descent}
        \EndIf
    \EndFor
    \State \textbf{return} \(\theta\)
\EndProcedure
\end{algorithmic}
\end{algorithm}

\subsection{Multiple Sensors for \texorpdfstring{\(\mathcal{D} = \mathbb{T}^2\)}{D = T2}}
In extending our approach to two-dimensional domains, the challenge of insufficient temperature measurements is mitigated by increasing the number of sensors, as demonstrated in \autoref{subsubsec}.
Consequently, the necessity to reduce DoF as in the one-dimensional case becomes less critical.
Therefore, we define the search space for the 2D case as \(\mathbb{R}^{J \times J} \simeq \mathbb{R}^{J^2}\), where each parameter corresponds to the logarithm of conductivity at discrete grid points:
\[ \theta_{(i-1)J + j} = \ln a(x_{i}, y_{j}), \quad i, j \in \{1, 2, \ldots, J\}. \]
Here, the dimension \(\dim\theta = J^2\) is fixed, and the indexing of \(\theta\) starts from 1 instead of 0, unlike in the 1D case, as there is no even/odd parity of the indices corresponding to cosine and sine basis functions.

Similar as 1D, loss is taken to be the MSE between prediction and observation:
\begin{equation}\label{eqn:loss2}
    L(\theta):= \frac{1}{MK} \sum_{k=1}^{K} \sum_{m=1}^{M} [u(t_m,s_k(t_m)) - \mathtt{Extract}_{t_m,s_k(t_m)}(\tilde{u}(\cdot,\cdot;a(\cdot;\theta))) ]^2,
\end{equation}
where \(K\) is the number of sensors and \(s_k(t)\) is the position of the \(k^\text{th}\) sensor at time \(t\).
We assume that all sensors are synchronized and record temperature measurements simultaneously.

The reason for adopting the natural pixel basis is that a naive reduction of DoF using a 2D Fourier basis does not work well in two dimensions.
A similar effect is observed in JPEG compression, where an image is segmented into \(8 \times 8\) pixel blocks, and each block is decomposed into its 2D Fourier series coefficients, instead of directly decomposing the full image into Fourier cosine series as in \citet{125072}.
More adaptive approaches are proposed and tested in \citet{Du_2023}, but we do not apply reduction of DoF in 2D due to the difficulty in implementation.

\section{Numerical Experiments}\label{sec5}

We leverage the automatic differentiation capabilities of PyTorch~\citep{Paszke2017AutomaticDI, Paszke2019PyTorchAI} and the one-step ODE solver \texttt{odeint} provided by the torchdiffeq library ~\citep{torchdiffeq}.
Automatic differentiation enables the computation of \(\nabla_\theta L(\theta)\) (via \texttt{loss.backward()} in \cref{alg:afs}) by evaluating the loss function only once, thereby requiring only a single forward PDE solve per gradient calculation.

In contrast, finite difference methods for gradient estimation necessitate multiple evaluations of the loss function.
Specifically, the gradient component in the direction of \(\mathbf{e}_j\) is approximated as:
\[ \nabla_\theta L(\theta) \cdot \mathbf{e}_j \approx \frac{1}{\Delta \theta} \left[ L(\theta + \Delta\theta\,\mathbf{e}_j) - L(\theta) \right], \qquad j=1,\ldots,\dim\theta, \]
and the full gradient \(\nabla_\theta L(\theta)\) requires \(\dim\theta + 1\) evaluations of the loss function.
This approach significantly increases computational costs, as each additional degree of freedom requires an extra forward PDE solve.
Even with parallel computation, the total number of forward PDE solves remains prohibitive.

To assess the quality of conductivity reconstructions, we define the \emph{relative error} as:
\begin{equation}\label{eqn:relative-error}
\text{Relative error} := \frac{\|a_\texttt{reconstructed} - a_\texttt{truth}\|}{\|a_\texttt{truth}\|}.
\end{equation}
For the one-dimensional case, we set \(J=100\), computing the norm in \(\mathbb{R}^{100}\).
In the two-dimensional case, we choose \(J=32\) and employ the Frobenius norm.
Both cases assume the ground truth conductivity lies within the range of \(\sim10^{-2}\), which ensures the time interval \([0,1]\) primarily overlaps with the high-sensitivity window.
If the conductivity is too large (\(>1\)), the heat dissipates too quickly, and only a small portion of the temperature measurements fall within the high-sensitivity window.
In such cases, the available data becomes insufficient for accurate conductivity reconstruction.
Similarly, we assume the minimum conductivity is of the same order as the maximum.
The scenario where the minimum conductivity is orders of magnitude smaller than the maximum, a typical multi-scale problem, falls outside the scope of this research and will be addressed in future work.

\subsection{\texorpdfstring{Single Sensor on \(\mathbb{T}\) with Discretization \(J=100\)}{Single sensor on T with discretization J=100}}
In the numerical experiments, we will investigate \(M=J=100\) while \(N=9\) in \cref{alg:afs}, leading to  \(\dim\theta\leq19\).
Remarkably, the code in Supplementary Materials demonstrates how the forward PDE can be efficiently solved with automatic differentiation enabled.
This allows for efficient computation of the gradient of the loss function with respect to the input parameter \(\theta\) via backpropagation \citep{Rumelhart1986}, eliminating the need for finite difference methods and significantly reducing computational overhead while improving efficiency.

\subsubsection{Testcases: Heaviside, PieceLinear3S, PieceLinear4W}

We design three different test cases to evaluate the performance of the conductivity reconstruction: Heaviside, PieceLinear3S, and PieceLinear4W.
\begin{align*}
    a^\texttt{Heaviside}(x) &= \frac{1}{100}\cdot\begin{cases} 1, & x\in\left[0,\frac{1}{2}\right) \\ 2, & x\in\left[\frac{1}{2}, 1\right) \end{cases}, \\
    a^\texttt{PieceLinear3S}(x) &= \frac{1}{100}\cdot \begin{cases}
        {2 - 3x}, & x \in \left[0, \frac{1}{3}\right) \\
        {6x - 1}, & x \in \left[\frac{1}{3}, \frac{2}{3}\right) \\
        {5 - 3x}, & x \in \left[\frac{2}{3}, 1\right) \end{cases}, \\
    a^\texttt{PieceLinear4W}(x) &= \frac{1}{100} \cdot \begin{cases}
        {2 - 4x}, & x \in \left[0, \frac{1}{4}\right) \\
        {8x - 1}, & x \in \left[\frac{1}{4}, \frac{1}{2}\right) \\
        {7 - 8x}, & x \in \left[\frac{1}{2}, \frac{3}{4}\right) \\
        {4x - 2}, & x \in \left[\frac{3}{4}, 1\right) \end{cases}.
\end{align*}
The latter two are named for their ``piecewise linear functions", with PieceLinear3S having 3 segments resembling a straightened S and PieceLinear4W having 4 segments resembling a W.
\begin{figure}[htbp]
\begin{subfigure}[h]{0.24\textwidth}
\includegraphics[width=\textwidth]{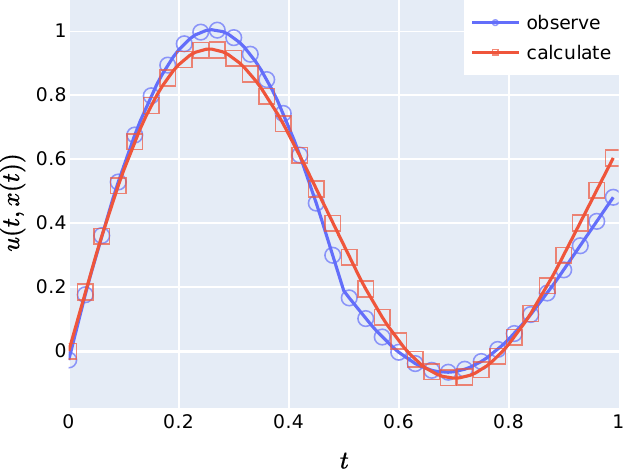}
\caption{Initial loss}\label{fig:Heaviside-initial-loss}
\end{subfigure}\hfill
\begin{subfigure}[h]{0.24\textwidth}
\includegraphics[width=\textwidth]{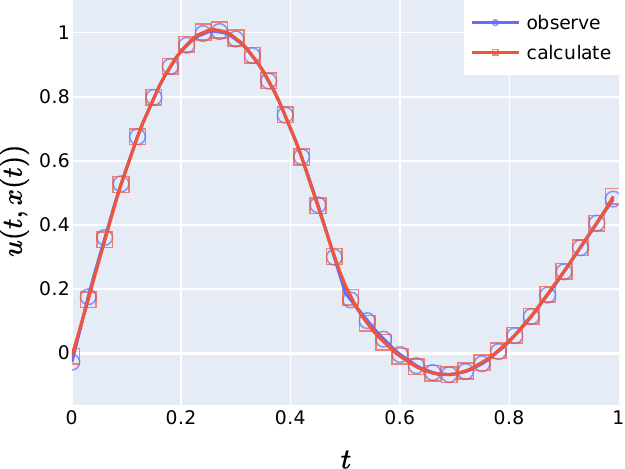}
\caption{Final loss}\label{fig:Heaviside-final-loss}
\end{subfigure}\hfill
\begin{subfigure}[h]{0.24\textwidth}
\includegraphics[width=\textwidth]{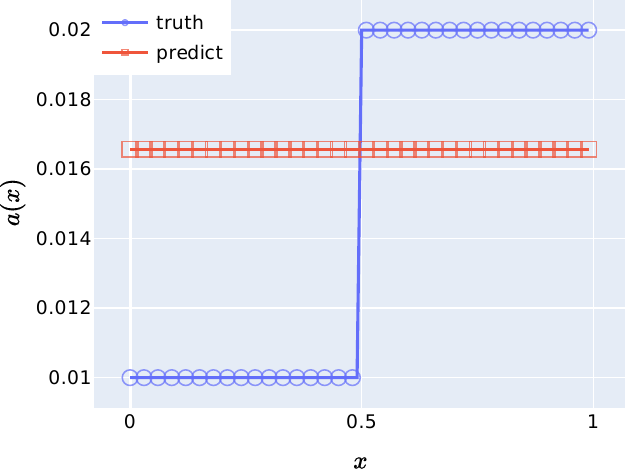}
\caption{Initial error}\label{fig:Heaviside-initial-error}
\end{subfigure}\hfill
\begin{subfigure}[h]{0.24\textwidth}
\includegraphics[width=\textwidth]{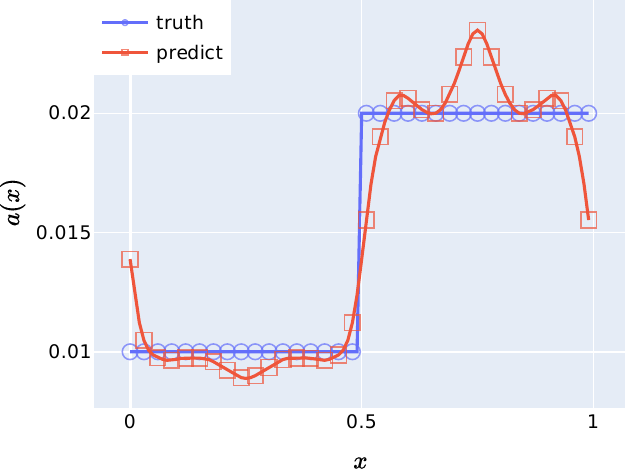}
\caption{Final error}\label{fig:Heaviside-final-error}
\end{subfigure}
\caption{1D test case: Heaviside function}
\end{figure}
\begin{figure}[htbp]
\begin{subfigure}[h]{0.24\textwidth}
\includegraphics[width=\textwidth]{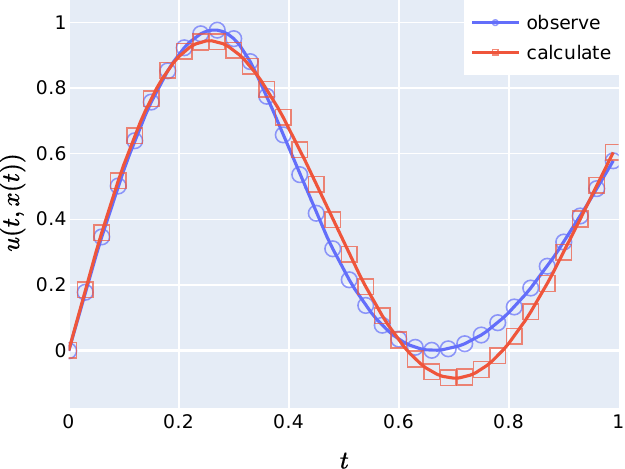}
\caption{Initial loss}
\end{subfigure}\hfill
\begin{subfigure}[h]{0.24\textwidth}
\includegraphics[width=\textwidth]{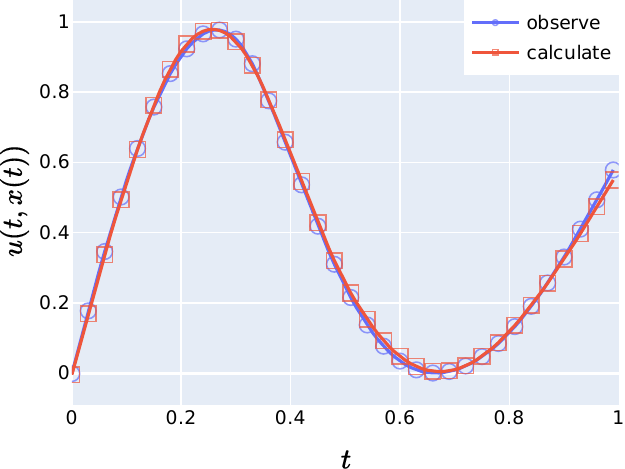}
\caption{Final loss}
\end{subfigure}\hfill
\begin{subfigure}[h]{0.24\textwidth}
\includegraphics[width=\textwidth]{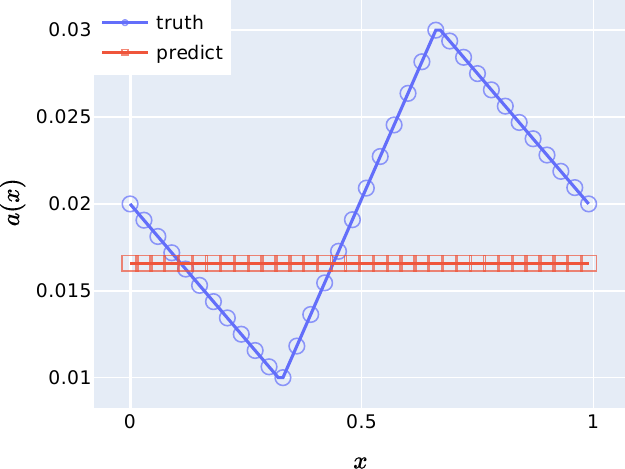}
\caption{Initial error}
\end{subfigure}\hfill
\begin{subfigure}[h]{0.24\textwidth}
\includegraphics[width=\textwidth]{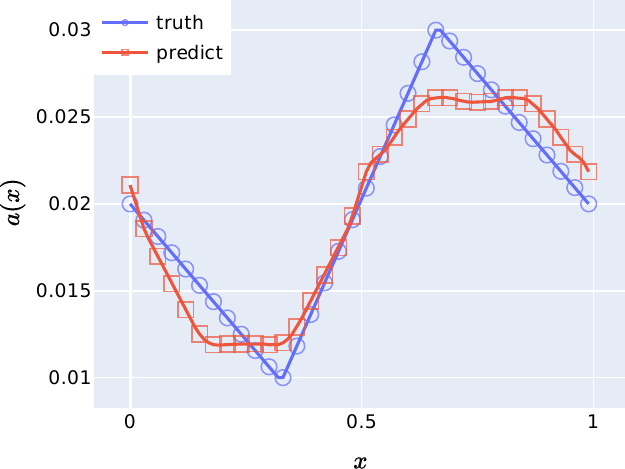}
\caption{Final error}
\end{subfigure}
\caption{1D test case: Piecewise Linear (3-piece, S shape)}
\label{fig:PieceLinear3S}
\end{figure}
\begin{figure}[htbp]
\begin{subfigure}[h]{0.24\textwidth}
\includegraphics[width=\textwidth]{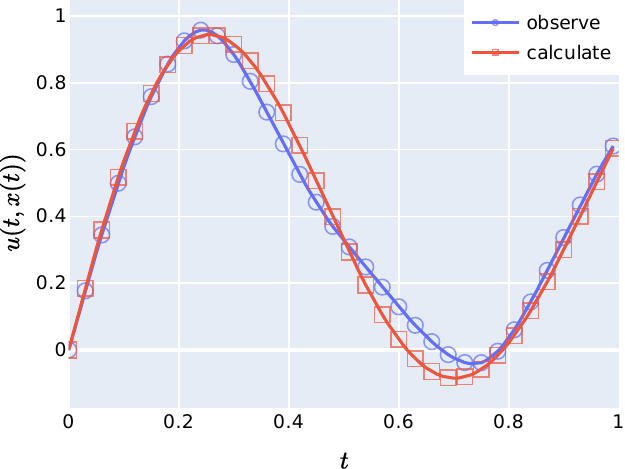}
\caption{Initial loss}
\end{subfigure}\hfill
\begin{subfigure}[h]{0.24\textwidth}
\includegraphics[width=\textwidth]{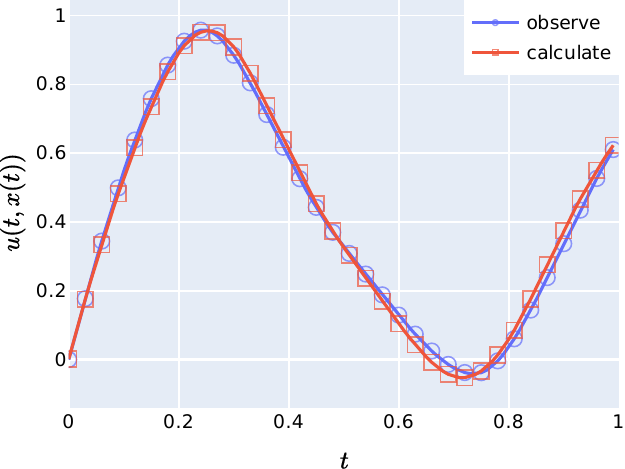}
\caption{Final loss}
\end{subfigure}\hfill
\begin{subfigure}[h]{0.24\textwidth}
\includegraphics[width=\textwidth]{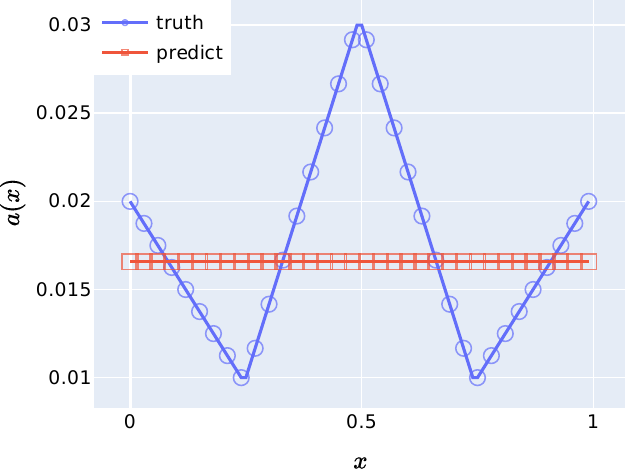}
\caption{Initial error}
\end{subfigure}\hfill
\begin{subfigure}[h]{0.24\textwidth}
\includegraphics[width=\textwidth]{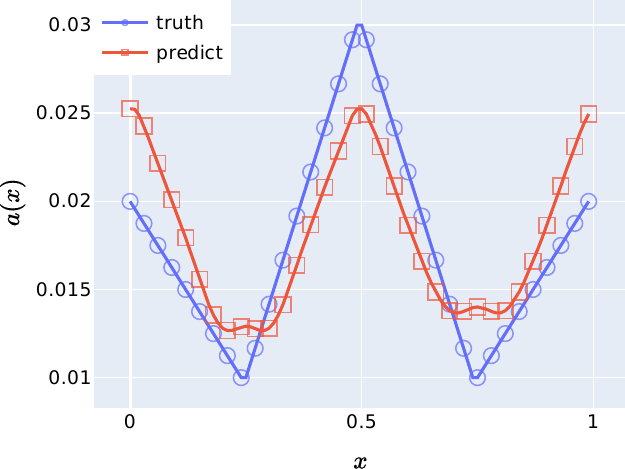}
\caption{Final error}
\end{subfigure}
\caption{1D test case: Piecewise Linear (4-piece, W shape)}
\label{fig:PieceLinear4W}
\end{figure}
The blue circled line in \cref{fig:Heaviside-initial-loss} represents the temperature measurement obtained from the sensor (by solving the forward problem), while the red squared line is the calculated temperature measurement based on the initial guess of conductivity (red squared line in \cref{fig:Heaviside-initial-error}).
The \(L^2\) difference between these two lines in \cref{fig:Heaviside-initial-loss} represents the loss, while \(L^2\) difference between the two lines in \cref{fig:Heaviside-initial-error} is the error.
After training, the calculated temperature measurement is plotted as red squared line in \cref{fig:Heaviside-final-loss}, which closely matches the temperature recorded by the sensor.
Additionally, the underlying reconstructed conductivity is plotted as the red squared line in \cref{fig:Heaviside-final-error}, showing significant improvement over the constant initial guess.
The same setup is performed on two other conductivity profiles as shown in \cref{fig:PieceLinear3S} and \cref{fig:PieceLinear4W}, which leads to similar improvements.

\subsubsection{Optimization record}
We illustrates the progress of \cref{alg:afs} through three key metrics: The number of Fourier basis functions, the loss, and the error over 500 epochs (meaning \texttt{loss.backward()} is called 500 times).
\begin{figure}[htbp]
\begin{subfigure}[h]{0.32\textwidth}
\includegraphics[width=\textwidth]{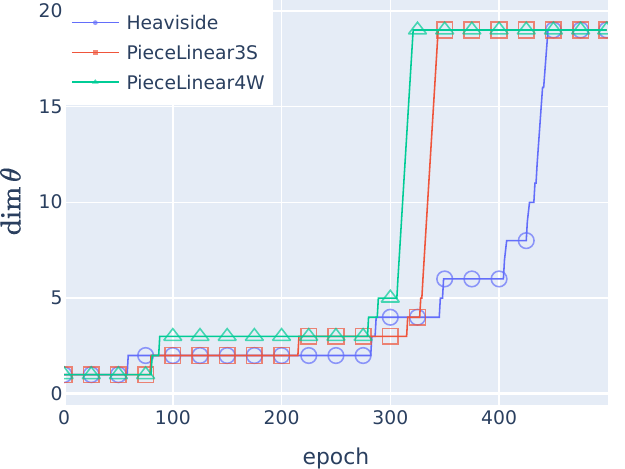}
\caption{Number of Fourier basis}\label{fig:1Dtest-fbasis}
\end{subfigure}\hfill
\begin{subfigure}[h]{0.32\textwidth}
\includegraphics[width=\textwidth]{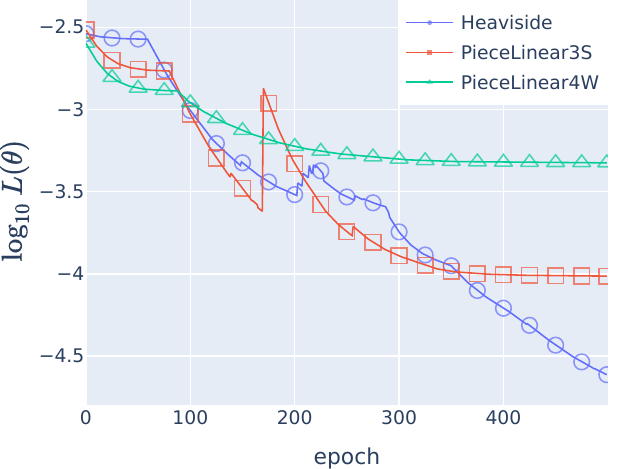}
\caption{Loss}\label{fig:1Dtest-loss}
\end{subfigure}\hfill
\begin{subfigure}[h]{0.32\textwidth}
\includegraphics[width=\textwidth]{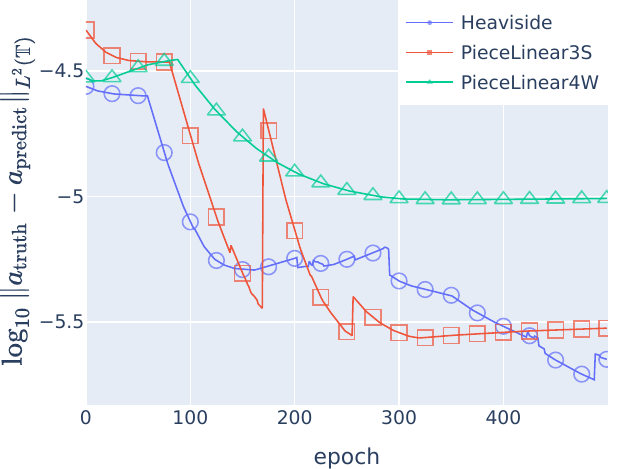}
\caption{Error}\label{fig:1Dtest-error}
\end{subfigure}
\caption{Training logs}
\end{figure}
\Cref{fig:1Dtest-fbasis} shows how the number of Fourier basis functions evolves throughout the training process, and the jumps occur when gradient of the current model parameter \(\nabla_\theta L\) nearly vanishes, which eventually reaches the cap \(\dim\theta\leq19\).
Additionally, \cref{fig:1Dtest-loss} and \cref{fig:1Dtest-error} demonstrate the corresponding reduction in the loss and error, respectively, indicating steady improvement in model performance as training progresses.
The vertical axis of \cref{fig:1Dtest-error} can be interpreted as the logarithm of \cref{eqn:relative-error} since they only differ by a constant.

\subsection{\texorpdfstring{Moving sensors on \(\mathbb{T}^2\) with Discretization \(J=32\)}{Moving sensors on T2 with discretization J=32}}
For the 2D problem, we utilize MNIST grayscale images \citep{lecun1998gradient} as test cases to evaluate the performance of the conductivity reconstruction using moving sensors.
The original MNIST images, which are \(28 \times 28\) pixels with grayscale values in the range \([0, 1]\), are first resized to \(32 \times 32\) (the nearest dyadic number) using \texttt{torchvision.transforms.Resize((32, 32))} \citep{torchvision2016}.
We then apply an affine transformation to scale the pixel values to the range \([\texttt{1e-2}, \texttt{2e-2}]\), representing conductivity values.
This transformation is defined as:
\[ a(x_i, y_j) = \frac{p(i,j)+1}{100}, \]
where \(p(i,j)\) denotes the pixel value at the \(i\)-th row and \(j\)-th column of the grayscale image.

In this transformation:
\begin{itemize}
    \item Pixels in the background of the MNIST images (with values near 0) are mapped to a conductivity of approximately 0.01, corresponding to purple regions in \cref{fig:mnist00-truth}.
    \item Pixels corresponding to the handwritten digits (with values near 1) are mapped to a conductivity of approximately 0.02, corresponding to yellow regions in \cref{fig:mnist00-truth}.
\end{itemize}
This transformation ensures that the conductivity values fall within a reasonable range, preventing overly fast heat diffusion during the simulation, which could result in insufficient data being collected within each sensor's high-sensitivity window.

The moving sensors are then deployed to capture temperature data from the forward PDE solver using conductivity as these transformed MNIST images.
It is important to note that the traditional machine learning concepts of training and testing sets do not apply here, as the solver does not require label knowledge of the MNIST images -- all the information it processes comes solely from the sensor measurements (and initial temperature).

Finally, to evaluate the reconstruction performance, we compare the results obtained from the moving sensor setup with those from an alternative configuration involving 16 and 64 static sensors, respectively.
The comparison in the video \href{https://youtu.be/Z9h16V2FRss}{https://youtu.be/Z9h16V2FRss} allows us to assess the impact of sensor mobility on the accuracy of the conductivity reconstruction.

\subsubsection{4 moving sensors}
\cref{fig:mnist00} illustrates the forward problem using the transformed \texttt{MNIST[0]} image as \(a_\texttt{truth}\).
We obtain the temperature dynamics on a sufficiently dense time grid in \(t\in[0,1]\) and show only the representative snapshots at \(t=0\), \(t=\tfrac{1}{2}\), \(t=1\) respectively in \cref{fig:mnist00-forward}.
\begin{figure}[htbp]
\begin{subfigure}[h]{0.24\textwidth}
\includegraphics[width=\textwidth]{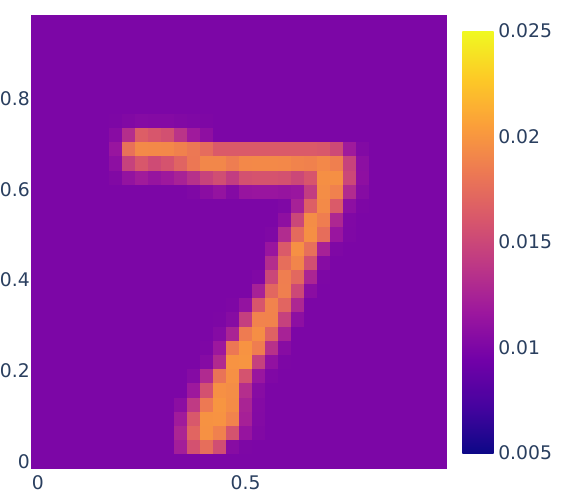}
\caption{\(a_\texttt{truth}\)}
\label{fig:mnist00-truth}
\end{subfigure}\hfill
\begin{subfigure}[h]{0.75\textwidth}
\includegraphics[width=\textwidth]{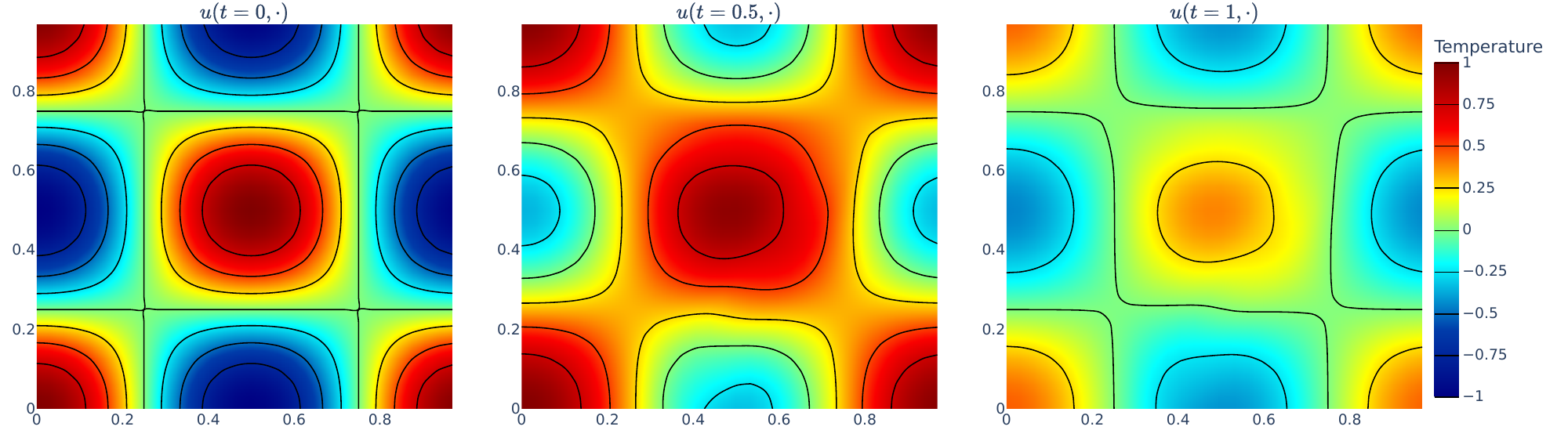}
\caption{Temperature dynamics snapshots at \(t=0\), \(t=\tfrac{1}{2}\), \(t=1\)}
\label{fig:mnist00-forward}
\end{subfigure}
\caption{\texttt{MNIST[0]} as \(a_\texttt{truth}\), \eqref{eqn:heat-init2} as initial condition and \eqref{eqn:heat-source2} as external heat source}
\label{fig:mnist00}
\end{figure}
Next, according to the configuration in \cref{fig:sensor-init_positions}, the 4 moving sensors record temperature in \cref{fig:4m-mnist00-measurement}.
\begin{figure}[htbp]
\begin{subfigure}[h]{\textwidth}
\centering
\includegraphics[width=0.45\textwidth]{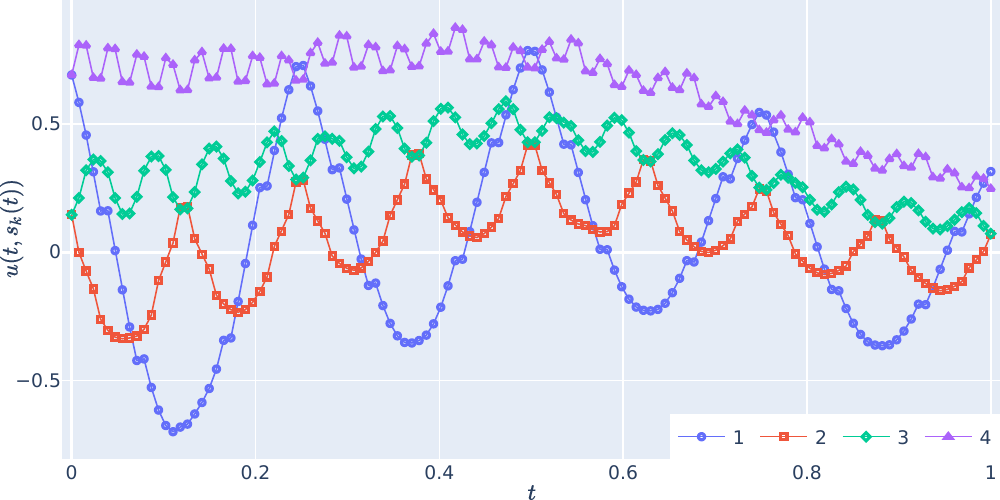}
\caption{Temperature measurements from 4 moving sensors}
\label{fig:4m-mnist00-measurement}
\end{subfigure}
\begin{center}
\begin{subfigure}[h]{0.44\textwidth}
\includegraphics[width=\textwidth]{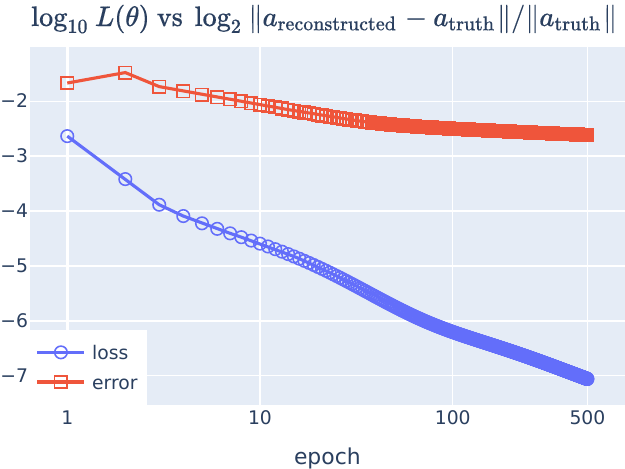}
\caption{Loss and error decay}
\label{fig:4m-mnist00-loss-error}
\end{subfigure}
\begin{subfigure}[h]{0.44\textwidth}
\includegraphics[width=\textwidth]{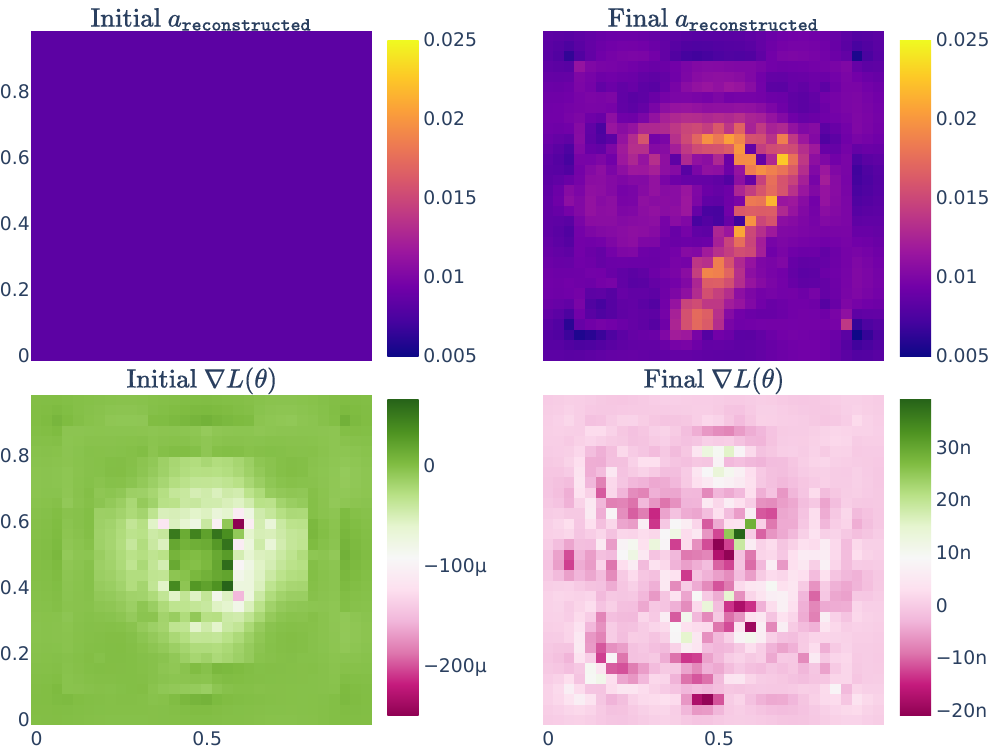}
\caption{Reconstruction and gradient of loss}
\label{fig:4m-recon-grads}
\end{subfigure}
\end{center}
\begin{center}
\begin{subfigure}[h]{0.44\textwidth}
\includegraphics[width=\textwidth]{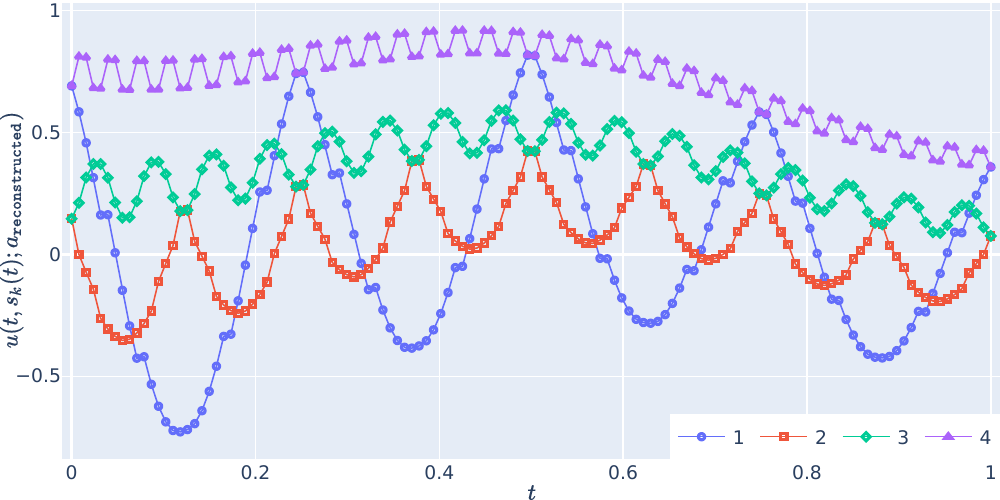}
\caption{Initial simulated measurement}
\label{fig:4m-comp-init}
\end{subfigure}
\begin{subfigure}[h]{0.44\textwidth}
\includegraphics[width=\textwidth]{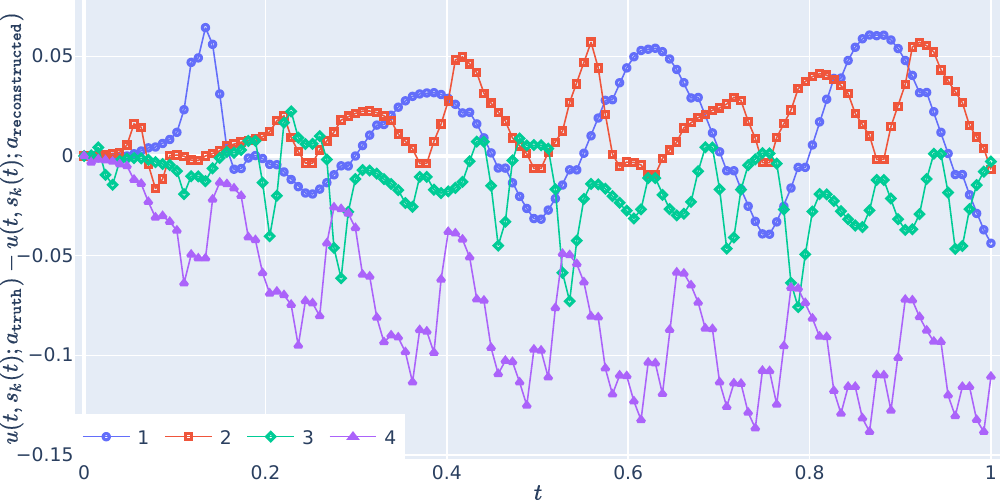}
\caption{Initial residual}
\label{fig:4m-residual-init}
\end{subfigure}
\end{center}
\begin{center}
\begin{subfigure}[h]{0.44\textwidth}
\includegraphics[width=\textwidth]{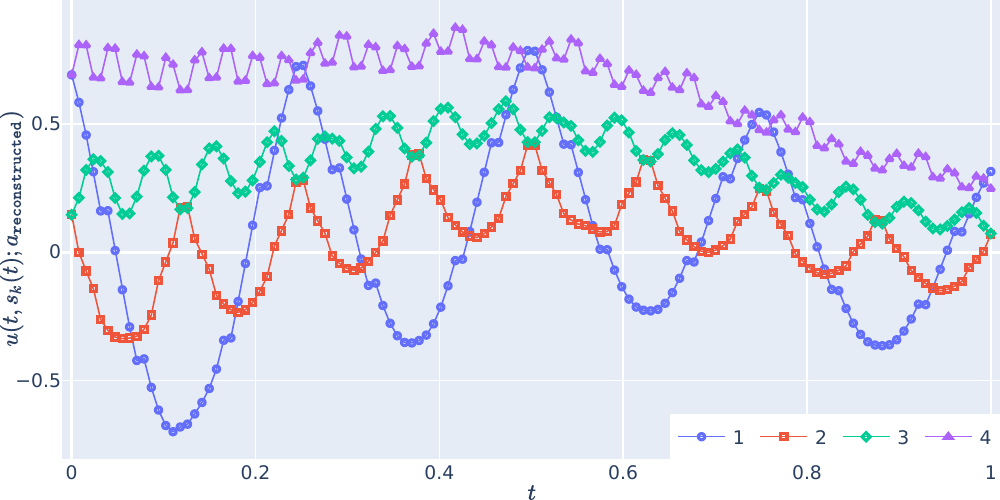}
\caption{Final simulated measurement}
\label{fig:4m-comp-final}
\end{subfigure}
\begin{subfigure}[h]{0.44\textwidth}
\includegraphics[width=\textwidth]{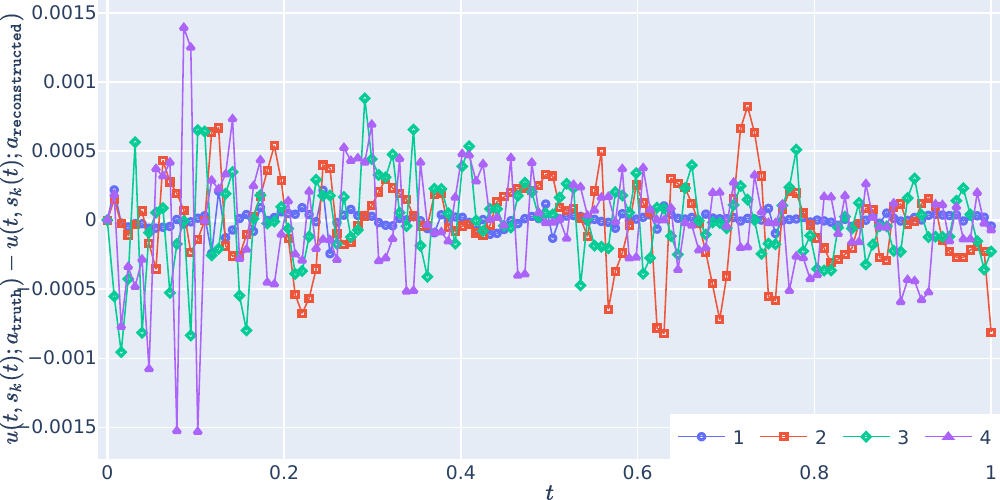}
\caption{Final residual}
\label{fig:4m-residual-final}
\end{subfigure}
\end{center}
\caption{Training log on \texttt{\texttt{MNIST[0]}}}
\end{figure}
Then, we initiate the gradient descent with a constant initial guess (top-left of \cref{fig:4m-recon-grads}), which serves as a conservative lower bound for the true conductivity.
\Cref{fig:4m-comp-init} is the simulated measurement taking \(a_\texttt{reconstructed}\) as the constant initial guess, but there is a significant discrepancy when compared to \cref{fig:4m-mnist00-measurement}, and their differences are displayed in \cref{fig:4m-residual-init}.
The gradient of the loss function, depicted in the bottom-left of \cref{fig:4m-recon-grads}, is used to guide the descent direction.
By following this gradient over 500 epochs, we reach the final estimate, resulting in the reconstructed conductivity \(a_\texttt{reconstructed}\), shown in the top-right of \cref{fig:4m-recon-grads}.
The simulated measurement taking the final estimate as \(a_\texttt{reconstructed}\) is shown in \cref{fig:4m-residual-final}, and the corresponding residual \cref{fig:4m-residual-final} is much smaller, indicating improved accuracy in the reconstruction process.

\subsubsection{Baseline: 16 and 64 equidistant static sensors}
For a fair comparison, the setup of forward problem is exactly the same as the 4 moving sensors, and the only difference is the placement of the sensors, isolating the impact of sensor configuration on the measurement and reconstruction performance.
\begin{figure}[htbp]
\begin{center}
\begin{subfigure}[h]{0.3\textwidth}
\includegraphics[width=\textwidth]{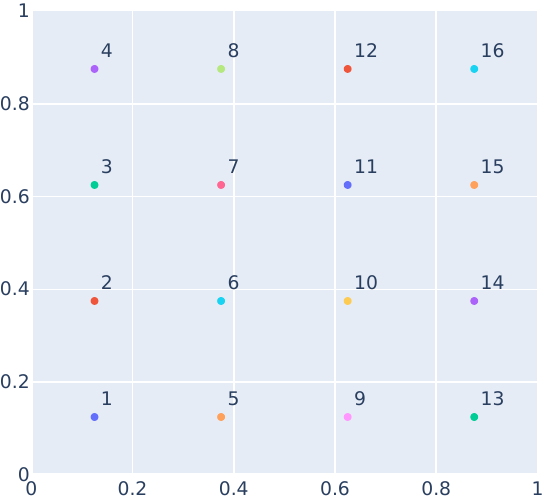}
\caption{16 sensor positions}
\label{fig:16-mnist00-forward}
\end{subfigure}\qquad
\begin{subfigure}[h]{0.5\textwidth}
\includegraphics[width=\textwidth]{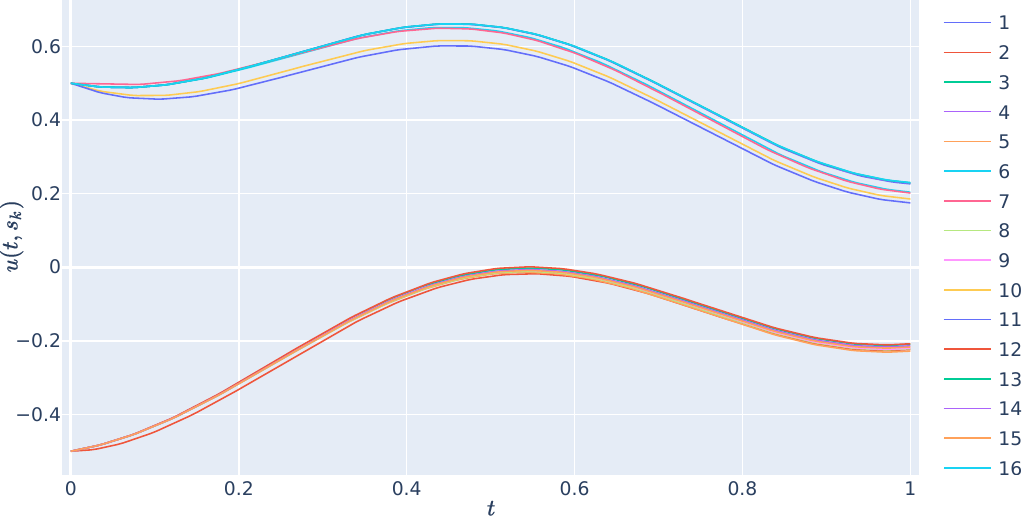}
\caption{Sensor measurements}
\label{fig:16-mnist00-init}
\end{subfigure}
\end{center}
\begin{center}
\begin{subfigure}[h]{0.4\textwidth}
\includegraphics[width=\textwidth]{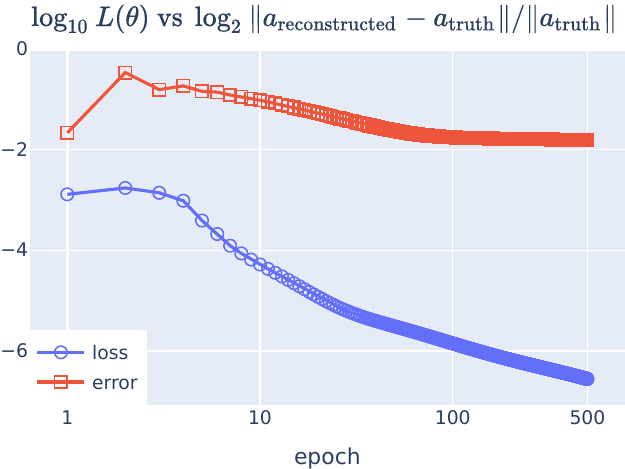}
\caption{Loss and error decay}
\label{fig:16-mnist00-loss-error}
\end{subfigure}\qquad
\begin{subfigure}[h]{0.4\textwidth}
\includegraphics[width=\textwidth]{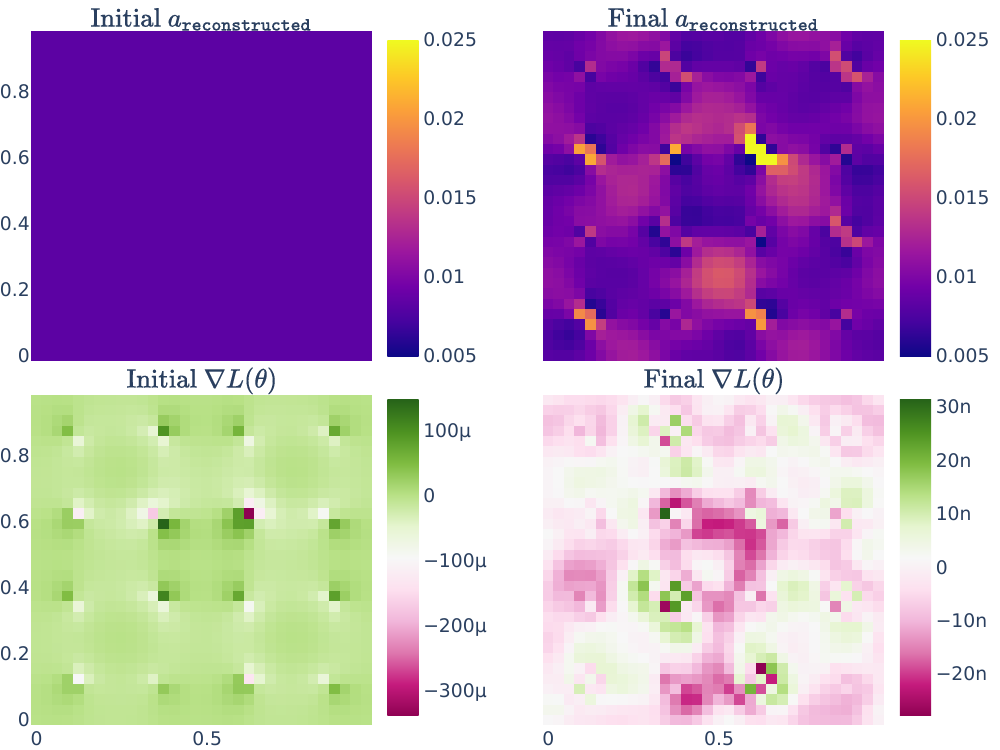}
\caption{Reconstruction and gradient of loss}
\label{fig:16-recon-grads}
\end{subfigure}
\end{center}
\begin{center}
\begin{subfigure}[h]{0.4\textwidth}
\includegraphics[width=\textwidth]{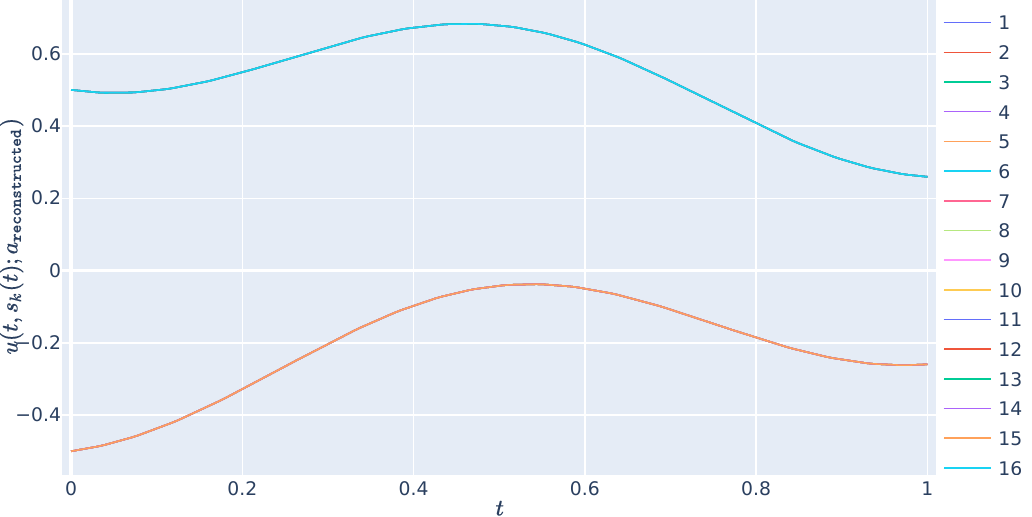}
\caption{Initial simulated measurement}
\label{fig:16-comp-init}
\end{subfigure}\qquad
\begin{subfigure}[h]{0.4\textwidth}
\includegraphics[width=\textwidth]{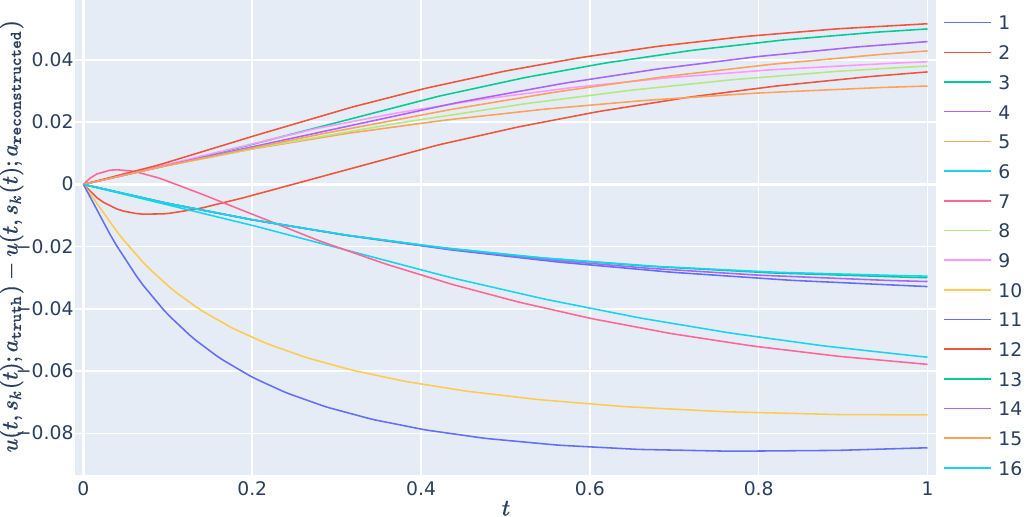}
\caption{Initial residual}
\label{fig:16-residual-init}
\end{subfigure}
\end{center}
\begin{center}
\begin{subfigure}[h]{0.4\textwidth}
\includegraphics[width=\textwidth]{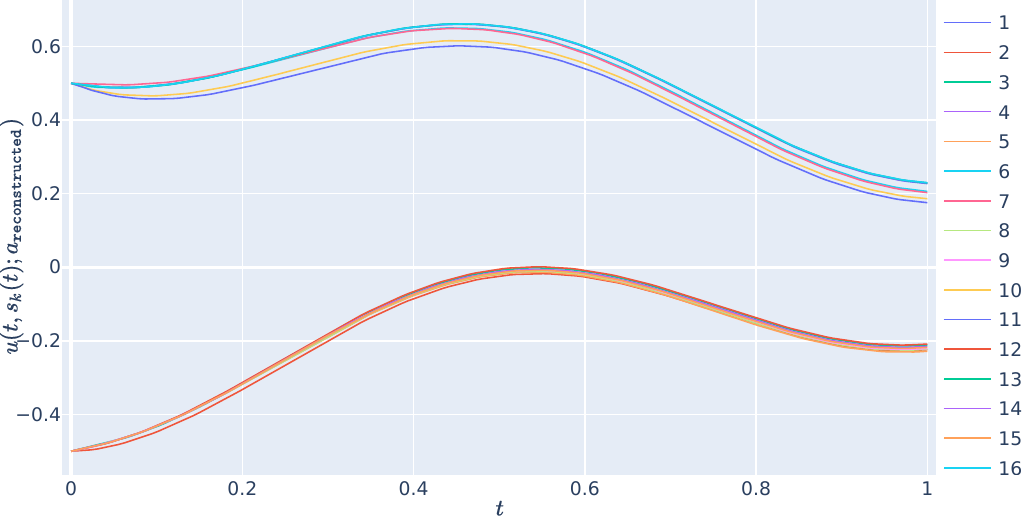}
\caption{Final simulated measurement}
\label{fig:16-comp-final}
\end{subfigure}\qquad
\begin{subfigure}[h]{0.4\textwidth}
\includegraphics[width=\textwidth]{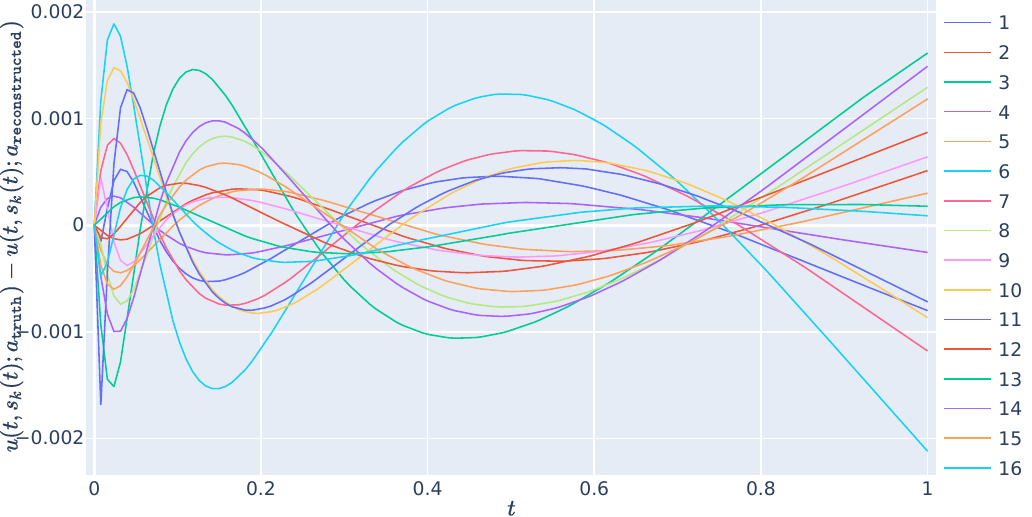}
\caption{Final residual}
\label{fig:16-residual-final}
\end{subfigure}
\end{center}
\caption{16 static sensors}
\label{fig:16-mnist00}
\end{figure}
Notice that only two curves appear in \autoref{fig:16-comp-init} because all measurements overlap to them due to the symmetry in initial condition, sensor positions and the initial guess \(a_\texttt{reconstructed}\) (top-left of \autoref{fig:16-recon-grads}).
\begin{figure}[htbp]
\begin{center}
\begin{subfigure}[h]{0.3\textwidth}
\includegraphics[width=\textwidth]{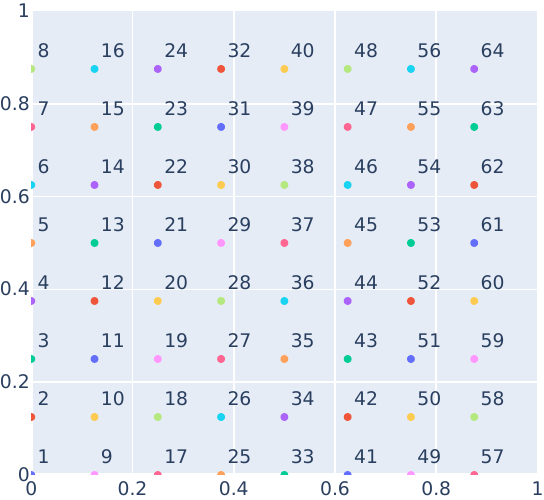}
\caption{64 sensor positions}
\label{fig:64-mnist00-forward}
\end{subfigure}\qquad
\begin{subfigure}[h]{0.5\textwidth}
\includegraphics[width=\textwidth]{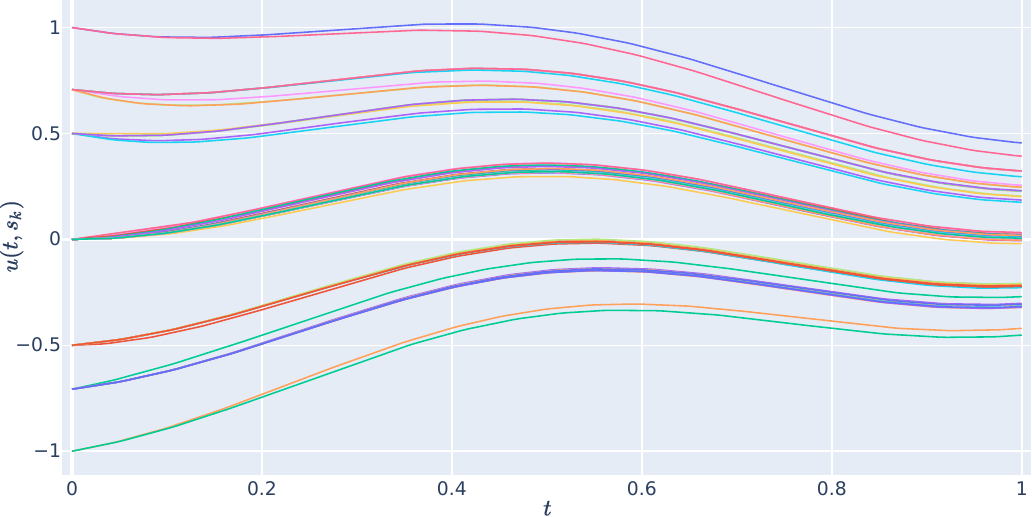}
\caption{Sensor measurements}
\label{fig:64-mnist00-init}
\end{subfigure}
\end{center}
\begin{center}
\begin{subfigure}[h]{0.4\textwidth}
\includegraphics[width=\textwidth]{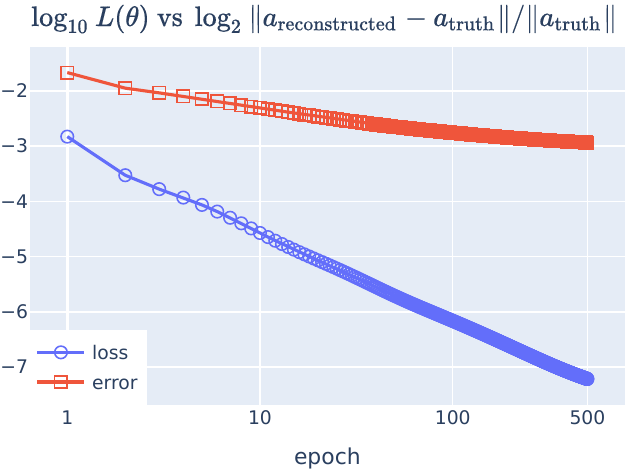}
\caption{Loss and error decay}
\label{fig:64-mnist00-loss-error}
\end{subfigure}\qquad
\begin{subfigure}[h]{0.4\textwidth}
\includegraphics[width=\textwidth]{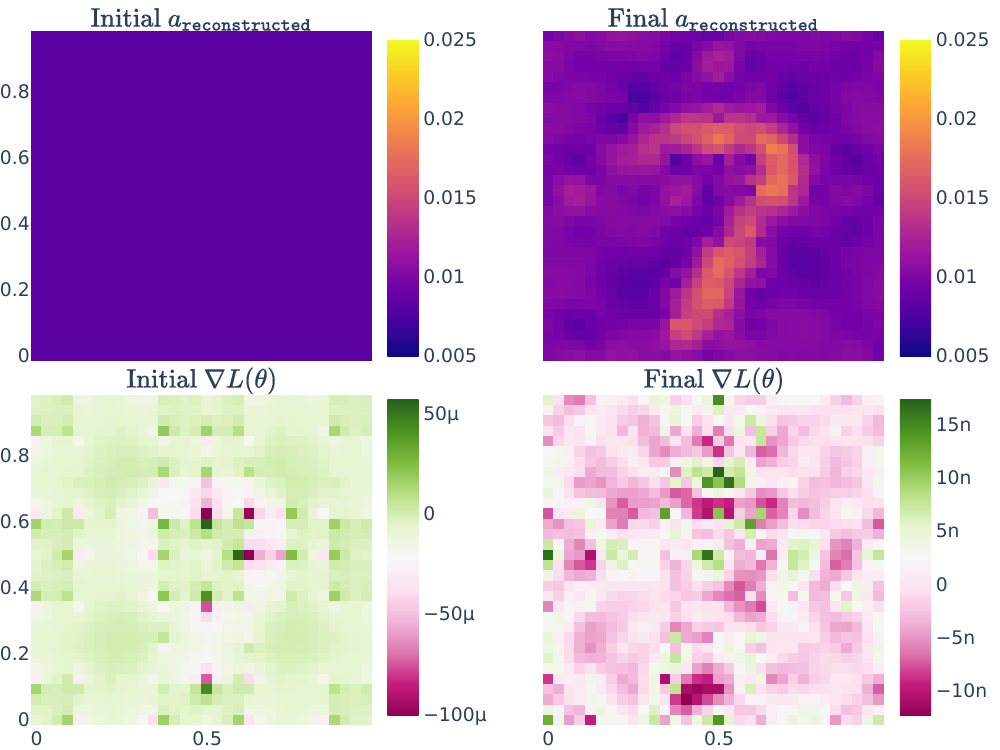}
\caption{Reconstruction and gradient of loss}
\label{fig:64-recon-grads}
\end{subfigure}
\end{center}
\begin{center}
\begin{subfigure}[h]{0.4\textwidth}
\includegraphics[width=\textwidth]{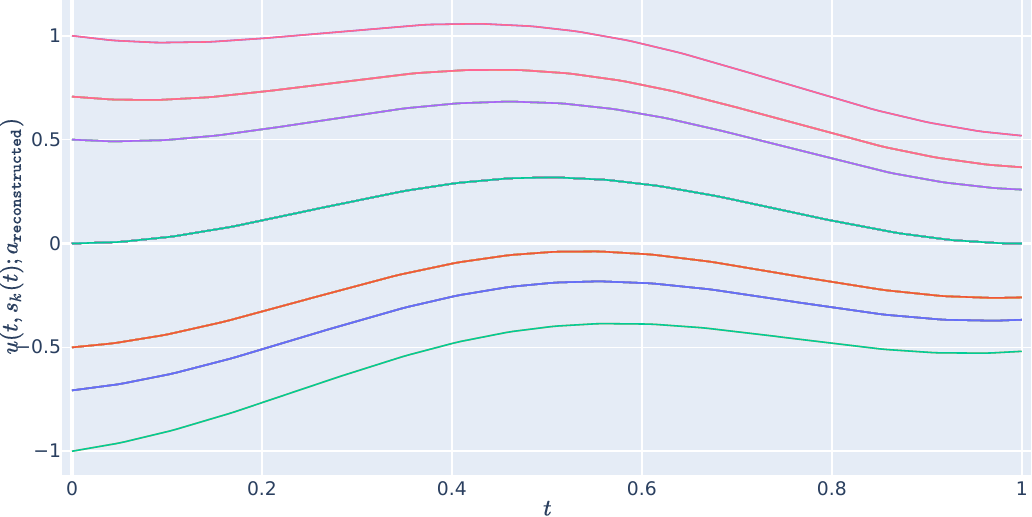}
\caption{Initial simulated measurement}
\label{fig:64-comp-init}
\end{subfigure}\qquad
\begin{subfigure}[h]{0.4\textwidth}
\includegraphics[width=\textwidth]{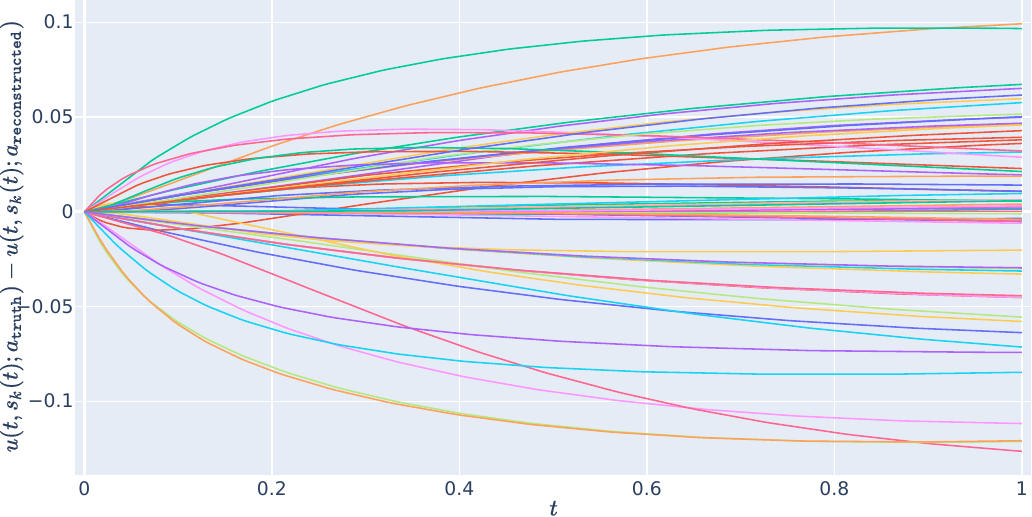}
\caption{Initial residual}
\label{fig:64-residual-init}
\end{subfigure}
\end{center}
\begin{center}
\begin{subfigure}[h]{0.4\textwidth}
\includegraphics[width=\textwidth]{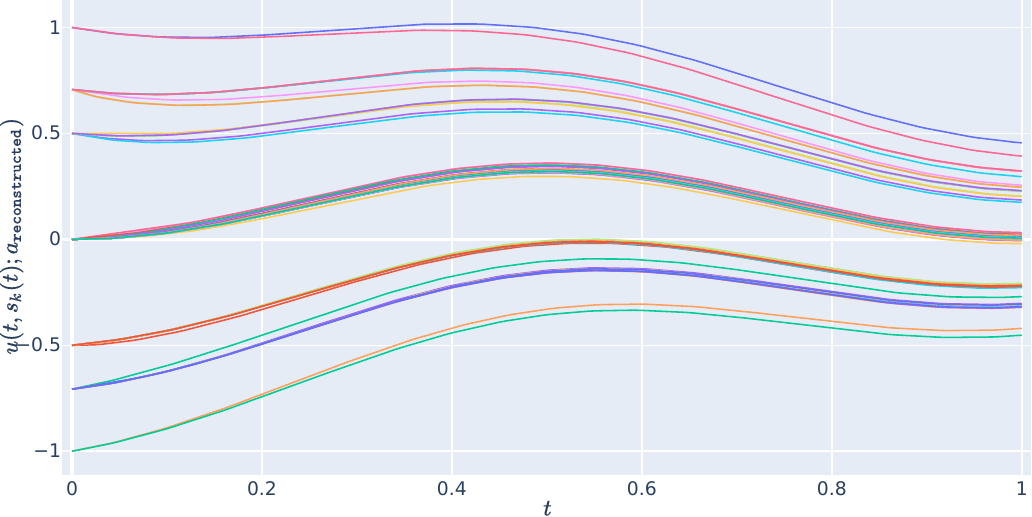}
\caption{Final simulated measurement}
\label{fig:64-comp-final}
\end{subfigure}\qquad
\begin{subfigure}[h]{0.4\textwidth}
\includegraphics[width=\textwidth]{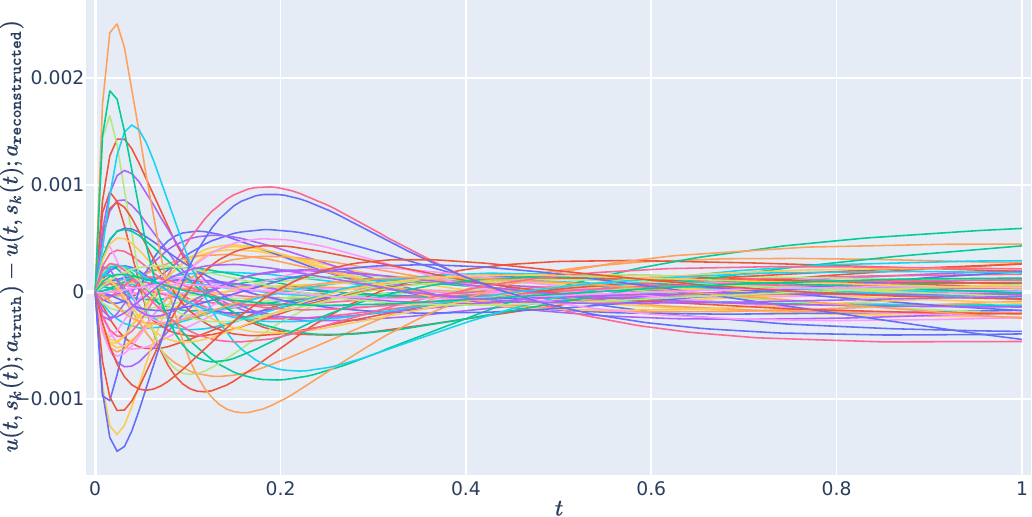}
\caption{Final residual}
\label{fig:64-residual-final}
\end{subfigure}
\end{center}
\caption{64 static sensors}
\label{fig:64-mnist00}
\end{figure}

The reconstruction using 16 static sensors is quite poor (top-right of \autoref{fig:16-recon-grads}).
While increasing the number of static sensors to 64 significantly improves the reconstruction accuracy (top-right of \autoref{fig:64-recon-grads}), the trade-off is the substantially higher number of sensors required.

To compare three different configurations of sensor placements effectively, we focus on evaluating the reconstruction error at a given loss level, rather than tracking the training epoch.
Specifically, we examine how the reconstruction error typically behaves as the loss decreases, as shown in \autoref{fig:2D-loss-error}.
\begin{figure}
\centering
\includegraphics[width=0.9\linewidth]{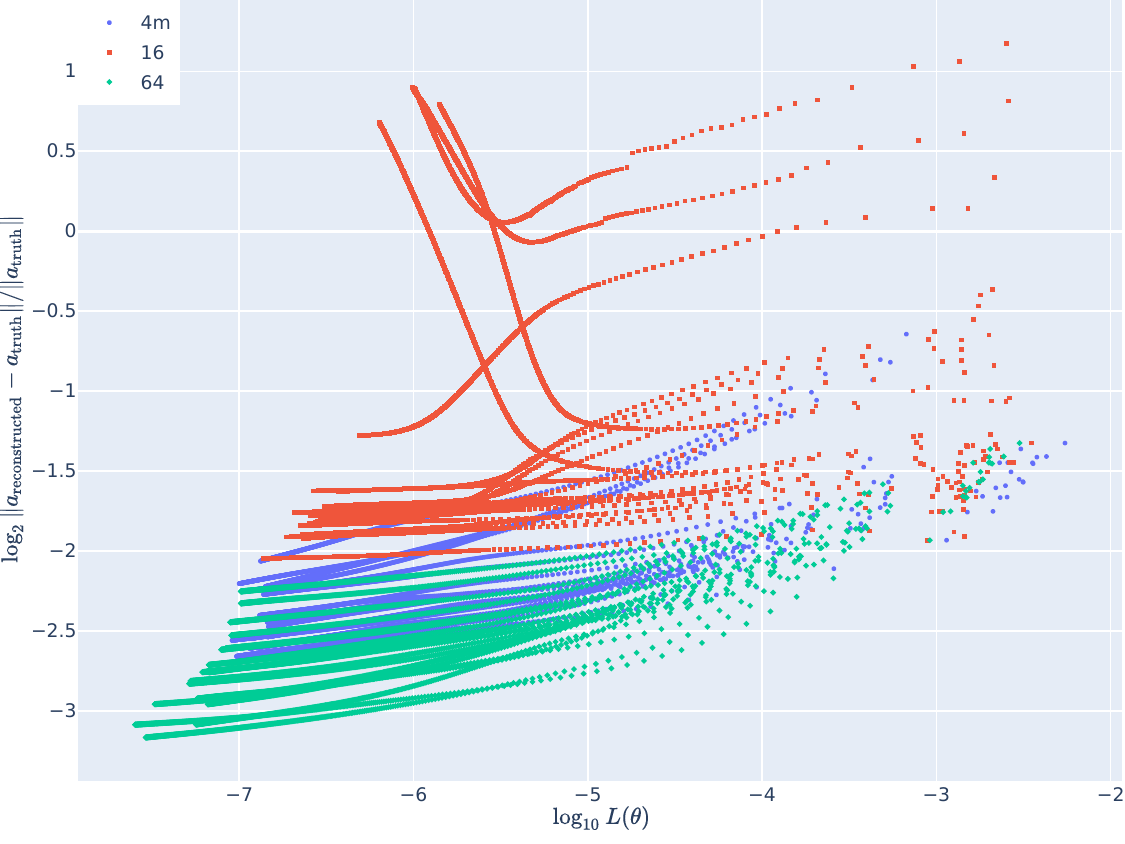}
\caption{Loss error relation based on first 21 images of MNIST}
\label{fig:2D-loss-error}
\end{figure}
The primary interval of interest is for loss values below \(10^{-6}\), as gradient descent generally minimizes the loss to this range (\autoref{fig:4m-mnist00-loss-error}, \autoref{fig:16-mnist00-loss-error} and \autoref{fig:64-mnist00-loss-error}).
Notably, the worst-case performance of the 4 moving sensors still outperforms the best-case scenario of 16 static sensors in terms of relative reconstruction error, despite recording no more amount of data.
Furthermore, since the final residual is at the level of \(10^{-3}\) (\autoref{fig:4m-residual-final}, \autoref{fig:16-residual-final} and \autoref{fig:64-residual-final}), measurements are noise-resilient up to the same scale of additive noise.
The code for \(\mathbb{T}^2\) is available at
\href{https://github.com/yugt/inverse-heat}{https://github.com/yugt/inverse-heat}.
\section{Conclusion}

In this work, we present a novel and efficient method for reconstructing thermal conductivity in both 1D and 2D domains using moving sensors.
Our approach leverages the mobility of sensors to maximize spatial coverage during the most sensitive phase of temperature measurement, allowing for accurate conductivity reconstruction with fewer sensors.
The use of automatic differentiation significantly reduces computational costs, enabling scalable and robust conductivity recovery.
Extensive simulations on both 1D and 2D domains, including evaluations on transformed MNIST images, demonstrate the method’s effectiveness.
Notably, even with only four moving sensors, the reconstruction accuracy outperforms configurations with 16 or even 64 static sensors, indicating the advantage of sensor mobility in sparse data collection scenarios.
The proposed approach, combining reduced computational burden and improved reconstruction quality, offers a practical and versatile solution for inverse problems in heat equations, applicable to a wide range of real-world thermal conductivity mapping tasks.
Future work may extend these techniques to domains with complex boundary conditions or multi-scale conductivity variations, further enhancing the applicability of this method.
\appendix
\section*{Acknowledgements}
The author sincerely thanks the referees for their useful comments.
The author also extends gratitude to Agnid Banerjee for proving  \cref{lemma:lipschitz-dep}, and to Kookjin Lee for providing access to a private computing node on the ASU Sol Supercomputer \citep{HPC:ASU23}, without which this work will take thousands of CPU hours serially.
The author acknowledge the usage of ChatGPT to fix grammar and general style and to perform proof-reading.

\bibliography{references}

\section{Animation}
\href{https://youtu.be/Z9h16V2FRss}{https://youtu.be/Z9h16V2FRss} shows the video of training logs on multiple instances of MNIST image as ground truth conductivity.
All the 4 input instance are tested on the three methods: 4 moving sensors, 16 equidistant static sensors and 64 equidistant static sensors.
The snapshots are also shown in \cref{extra-fig}.

\section{Code Snippets}
The code for the 1D circle is primarily pedagogical, serving as a foundation and inspiration for applying similar methods to 2D torus.
In this implementation, \texttt{self.ode\_func} corresponds to the ODE \begin{equation}\label{eqn:forward-ODE}
\dot{\mathbf{u}}(t) = A(a) \mathbf{u}(t) + \mathbf{f}(t),
\end{equation}
while the external heat source \texttt{self.heat\_source} is defined as in
\begin{equation}\label{eqn:heat-source}
f(t,x) = \sin(\pi t), \qquad (t,x)\in[0,\infty)\times\mathbb{T}.
\end{equation}
\texttt{self.forward\_PDE} first fills the matrix \(A(a)\) based on the input \(a(\cdot)\) on \(\mathbb{T}\), and its output represents \(u(t,x)\) on \([0,1] \times \mathbb{T}\), with each temperature measurement \(u(t_m, x_j)\) having automatic differentiation enabled.

In contrast, the code for the 2D torus is thoroughly tested, well-maintained, and highly recommended as the go-to reference.
It is hosted on
\begin{center}
\href{https://github.com/yugt/inverse-heat}{https://github.com/yugt/inverse-heat}.
\end{center}

\lstinputlisting[
    caption={InverseHeat1D}, 
    label={lst:inverse1D}, 
    language=Python,
    basicstyle=\footnotesize
]{./code/inverse_heat1D.py}

\section{Extra figures for 2D}\label{extra-fig}
The following figures mentions ``4m''  in their titles, which is an abbreviation of ``4 moving''.

\foreach \index in {04, 12, 13} {
    \foreach \sensors in {4m, 16, 64} {
        \begin{figure}[htbp]
            \begin{subfigure}[h]{\textwidth}
                \centering
                \includegraphics[width=0.8\textwidth, height=0.16\linewidth]{appendix/\sensors-\index-forward.pdf}
                \caption{Temperature dynamics and measurements from \sensors\ sensors}
            \end{subfigure}
            \begin{subfigure}[h]{\textwidth}
                \centering
                \includegraphics[width=0.8\textwidth]{appendix/\sensors-\index-e00.pdf}
                \caption{Initial guess: constant conductivity}
            \end{subfigure}\hfill
            \begin{subfigure}[h]{\textwidth}
                \centering
                \includegraphics[width=0.8\textwidth]{appendix/\sensors-\index-e5h.pdf}
                \caption{Reconstruction after 500 GD steps}
            \end{subfigure}
            \caption{Traning log for \texttt{MNIST[\index]}}
        \end{figure}
    }
}

\end{document}